\theoremstyle{thmstyleone}%
\newtheorem{theorem}{Theorem}
\newtheorem{prop}[theorem]{Proposition}%
\theoremstyle{thmstyletwo}%
\newtheorem{example}{Example}%
\theoremstyle{thmstylethree}%
\newcommand{\R}{\mathbb{R}}
\newcommand{\bit}{\begin{itemize}}
\newcommand{\eit}{\end{itemize}}
\newcommand{\K}{\mathcal{K}}
\newcommand{\X}{\mathcal{X}}
\newcommand{\Hyp}{\mathcal{H}}
\newcommand{\ci}{c}
\newcommand{\cset}{\mathcal{C}}
\newcommand{\csetl}{\cset^+}
\newcommand{\csetr}{\cset^-}
\newcommand{\classes}{\mathcal{K}}
\newcommand{\ki}{k}
\newcommand{\sensel}{t}
\newcommand{\senser}{u}
\newcommand{\hi}{r}
\newcommand{\hset}{\mathcal{M}}
\newcommand{\di}{i}
\newcommand{\dset}{\mathcal{N}}
\newcommand{\maxnorm}{\kappa}
\newcommand{\hcoef}{a}
\newcommand{\hrhs}{b}
\newcommand{\cclass}{z}
\newcommand{\dclass}{y}
\newcommand{\assign}{v}
\newcommand{\err}{e}
\newcommand{\coefset}{\mathcal{A}}
\newcommand{\dsum}{\displaystyle\sum}
\newcommand{\sign}{\mathrm{sign}}
\newcommand{\exclude}[1]{}
\let\origmaketitle\maketitle
\def\maketitle{
	\begingroup
	\def\uppercasenonmath##1{} 
	\let\MakeUppercase\relax 
	\origmaketitle
	\endgroup
}
\begin{document}

\title[]{\Large A Unified Optimization Framework for  Multiclass Classification with Structured Hyperplane Arrangements}

\author[V. Blanco, H. Kothari, \MakeLowercase{and} J. Luedtke]{
{\large V\'ictor Blanco$^{\dagger}$, Harshit Kothari$^{\ddagger}$, and James Luedtke$^{\ddagger}$}\medskip\\
$^\dagger$Institute of Mathematics (IMAG), Universidad de Granada\\
$^\ddagger$Dept. Industrial \& Systems Engineering, University of Wisconsin-Madison\\
\texttt{vblanco@ugr.es}, \texttt{hkothari2@wisc.edu}, \texttt{jim.luedtke@wisc.edu}
}

\maketitle

\begin{abstract}
In this paper, we propose a new mathematical optimization model for multiclass classification based on arrangements of hyperplanes. Our approach preserves the core support vector machine (SVM) paradigm of maximizing class separation while minimizing misclassification errors, and it is computationally more efficient than a previous formulation. We present a kernel-based extension that allows it to construct nonlinear decision boundaries. Furthermore, we show how the framework can naturally incorporate alternative geometric structures, including classification trees, $\ell_p$-SVMs, and models with discrete feature selection. To address large-scale instances, we develop a dynamic clustering matheuristic that leverages the proposed MIP formulation. Extensive computational experiments demonstrate the efficiency of the proposed model and dynamic clustering heuristic, and we report competitive classification performance on both synthetic datasets and real-world benchmarks from the UCI Machine Learning Repository, comparing our method with state-of-the-art implementations available in \texttt{scikit-learn}.
\end{abstract}

\section{Introduction}

Supervised classification is one of the fundamental problems in data science that aims to find algorithmic tools to decide the classification of a set of objects into a set of classes. The \emph{classifier} is derived from a set of training samples where the information of a set of individuals and their class is provided. The main challenge in supervised classification is then to construct classification rules able to capture, not only the known classes of the training sample observations, but also of out-of-sample observations. The applications of this type of tools can be found everywhere -- writing recognition~\cite{writing}, finance~\cite{credit,insurance}, and health\cite{cancer,cleveland} are just a few examples.

Different tools have been developed to construct classifiers. Most of them are optimization-based methodologies that derive the classifier as the solution of a mathematical optimization problem of different nature. Support Vector Machines~\cite{svm,bpr20}, Boosting~\cite{freund1997decision}, and Logistic Regression ~\cite{collins2002logistic} use of convex optimization tools, whereas others use tools from linear programming~\cite{freed1986evaluating}. More recently, discrete optimization has been used to derive reliable, interpretable, and robust classifiers. The theoretical advances in mixed integer linear and nonlinear optimization, together with the efficient implementations in the off-the-shelf optimization solvers, has made it possible to use discrete optimization for supervised classification with different characteristics.  Examples include classification and feature selection~\cite{baldomero2020tightening,baldomero2021robust,benitez2019cost,labbe2019mixed}, tree-shaped classification \cite{bertsimas2017optimal,aghaei2020learning,gunluk2018optimal,bmz-cpaior22,blanco2022forests,biggs2025tight}, robust classification~\cite{blanco2022mathematical,blanco2022robust}, among many others.

In this paper we analyze an extension of  the support vector machine (SVM) that both allows to classify multiclass instances and also detect patterns in geometrically intricate instances. SVM is one of the most popular optimization based tools for supervised binary classification~\cite{svm}. The linear and bi-class SVM methodology is based on finding a separating hyperplane that both maximizes the separation between classes and minimizes the sum of the training misclassifications errors. A significant advantage of this tool is that, using the convexity properties of the problem one can compute, via the \emph{kernel trick}, non linear separators with the same computational effort than a linear one. Nevertheless, this tool is designed only for bi-class instances, although some efforts have been paid to extend SVMs to multi-class instances. The most natural approaches, although suboptimal, are the so-called one-versus-all and one-versus-one, which are based on solving, independently, small instances of binary SVMs independent for different subsets of the training sample. Although one can extend the properties of SVM to these extensions, it has been shown that they allow to provide good classifiers only for instances which are geometrically simple. In another line of extensions of SVM to multiclass instances, in \cite{cs,ww,llw} the authors propose different global approaches to model the multiclass SVM problem as a single optimization problem. Nevertheless, as already noticed by  \cite{BlancoJaponPuerto_ADAC2020}, these extensions are based on mathematical optimization models which do not keep the main paradigm of the optimization problem behind the SVM, which is to construct linear structures that both minimize the misclassifications errors but also that maximize the separation between the different classes. In binary SVM the separation between the classes is measured by means of the Euclidean width of the margin, the space between the separated classes, which by \cite{mangasarian1999arbitrary} can be quantified as twice the inverse of the norm of the coefficients of the separating hyperplane. Thus, maximizing such a width, for a single SVM-based hyperplane, it is equivalent to minimize the norm of those coefficients. In the multiclass extensions in \cite{cs,ww,llw}, the optimization models are based on minimizing the sum of the norms of the different hyperplanes that are to be found, but this amount does not represent the actual separations between the classes. In contrast,  \cite{BlancoJaponPuerto_ADAC2020} propose a mixed integer nonlinear programming model to correctly maximize the minimum width of the different margins. The main bottleneck of this approach is the limited size of the training instances that can be optimally solved, since the models considers different discrete decisions in the models as the assignments of cells to classes, assignments of observations to cells and classes, different types of missclassification errors, etc. 

In this paper, we propose an alternative mathematical optimization model for determining hyperplanes that separate classes, which is computationally more efficient than previously proposed methodology. We investigate techniques to handle the symmetry inherent in the model and derive a general framework for computing nonlinear separators for a given training set, extending the kernel trick for SVM to our model. Although our methodology is primarily designed to extend SVM-based classifiers to multiclass problems, we show that it can be naturally adapted to incorporate other structures for geometrically separating classes, such as classification trees, $\ell_p$-SVMs, or SVM-based hyperplanes with discrete feature selection.

The proposed approaches are evaluated through an extensive set of computational experiments. First, we compare the computational performance of our proposed mathematical optimization model with the previous model, as well as existing methods from the literature for multiclass classification, on a collection of synthetic instances. Next, to allow our method to scale up to instances with many data points, we introduce a math-heuristic that uses our formulation on smaller cluster-based data sets that are constructed iteratively. We conduct experiments to assess the computational and predictive performance of the this method. Finally, we apply our method to real-world datasets from the UCI Machine Learning Repository and compare its \textit{classification} performance with that of the existing implementations available in \texttt{scikit-learn}.
 
The rest of the paper is organized as follows. In Section \ref{sec:1}, we introduce the multiclass classification problem under study, detailing all the elements involved. Section \ref{sec:2} presents the mathematical optimization model we propose for the problem, describe techniques for addressing symmetry in the formulation, and explain how the formulation can also be used to construct multivariate classification trees. In Section \ref{sec:3}, we develop a kernelized version of our model, which allows the construction of nonlinear decision boundaries in certain cases. Finally, Section \ref{sec:5} reports the results of our computational experiments.

\section{Multiclass Classification with Structured Hyperplane Arrangements}\label{sec:1}

In this section, we describe the problem under study and establish the notation used throughout the paper. The presentation is organized around the four main components of the problem: the input data, the arrangement of hyperplanes, the classification errors, and the resulting classification rule.\\

\noindent{\bf Training Data and Classification Rule}\\

We are given a training dataset $\X = \{(x_1, \dclass_1), \ldots, (x_n,\dclass_n)\} \subseteq \R^d \times \{1, \ldots, K\}$. We denote by $\dset=\{1, \ldots, n\}$ the index set for the observations in that sample and by $\classes=\{1, \dots, K\}$ the set of classes. For each $i\in \dset$, $x_i$ represents the values of the $d$ different features for such an observation, whereas $\dclass_i \in \classes$ represents its class.

The goal of a classification rule is to construct a classifier, based on the training sample, $\Psi_\X: \R^d \rightarrow \classes$, able to correctly classify out-of-sample data.

We analyze classifiers that are based on constructing a geometrical partition of the feature space $\R^d$ and assigning each of the resulting cells a class in $\classes$. In particular, $\Psi_\X(x) = k$, if $x \in R_k$, for $k\in \classes$, where $R_1, \ldots, R_K \subseteq \R^d$ are such that such $R_i \cap R_j = \emptyset$ for all $i, j=1, \ldots, K$ and $\bigcup_{i=1}^K R_i = \R^d$. 
Most popular classification methods in the literature are based on this idea, as SVM\cite{svm}, CART\cite{CART}, OCT\cite{bertsimas2017optimal}, LDA~\cite{fisher1936use}.

We assume that the partition defining the classifier is induced by an arrangement of a given number, $m$, of hyperplanes, $\Hyp=\{H_1, \ldots, H_m\}$, where $H_\hi= \{z \in \R^d: \hcoef_\di^{\top} z + \hrhs_\di=0\}$ for each $\hi \in \hset=\{1, \ldots, m\}$. The regions defining the partition results from the set of polyhedral cells obtained with the arrangements of hyperplanes $\Hyp$. Each of the cells induced by the arrangement can be uniquely identified with a sign pattern in $\{-1,1\}^{m}$, the sign when evaluating the points in the cell at each of the affine functions defining the hyperplanes (equivalently, the side of the hyperplane where the cell is located). We denote by $\cset =\{-1,1\}^m$ the set of cells induced by the arrangements, and by $\ci_\hi$ the sign in such a pattern for cell $\ci$ with respect to hyperplane $\hi\in M$. We say that observation $x \in \R^d$ is in the left (resp. right) -branch of the $\di$-th hyperplane $H_\hi = \{z \in \R^d: \hcoef_\hi^{\top} z + \hrhs_\hi = 0\}$ if $ \hcoef_\hi^{\top} z + \hrhs_\hi \geq 0$ (resp. $\hcoef_\hi^{\top} z + \hrhs_\hi < 0)$. 

Additionally, we denote by $\csetl_{\hi}$ the set of cells $\ci \in \cset$ that are associated to the left-branch of hyperplane $\hi \in \hset$, and by $\csetr_{\hi}$ the set of cells $\ci \in \cset$ that are associated to the right-branch of split $\hi \in \hset$.\\

\noindent{\bf Separating Hyperplanes}\\

We assume that the coefficients and intercept of the hyperplanes belong to the prespecified set $\coefset \subset \R^d \times \R$. This set represents generic restrictions on the coefficients and intercept that can be selected to define the hyperplanes. Different choices for $\coefset$ lead to different models. 

For instance, in SVM-based separators, the hyperplanes must assure a minimal separation between the classes that are separated by the hyperplanes. Given a hyperplane $H= \{z \in \R^d: \hcoef^{\top} z + \hrhs =0\}$, the $p$-norm separation (for $p\geq 1$) is defined by the expression $2/\|\hcoef\|_q$, where $q$ is such that $\frac{1}{p}+\frac{1}{q} = 1$. Thus, constraint $\|\hcoef\|_q \leq \kappa$ for a given $\kappa >0$ assure a minimal separation of $2/\kappa$. This constraint for the coefficient of the hyperplanes induces the set $\coefset_{{\rm SVM}_p} = \{(\hcoef,\hrhs) \in \R^d \times \R: \|\hcoef\|_q \leq \kappa\}$. Thus, for such a choice of $\coefset$, the parameter $\kappa$ can be interpreted as a hyperparameter of the model that controls the size of the margin around the hyperplanes, in a similar manner as the coefficient on the norm of the hyperplane in the objective of a standard SVM model. The particular case of $p=2$ (and then  $q=2$) corresponds to a 2-norm margin normalization, and will allow, as we will see, the application of the kernel trick to create nonlinear separating curves. The case $p=1$ corresponds to a 1-norm margin normalization, and may be useful for encouraging selection of coefficient vectors $\hcoef$ that are sparse.

Another possible choice is the one that allows only axis-aligned hyperplanes, i.e. $\coefset_{0}= \{(\hcoef,\hrhs) \in \R^d \times \R: \|\hcoef\|_0 =1\}$, which is convenient in interpretable machine learning (the classifier uses only one feature to separate the classes, so it is extremely simple to explain) and in feature selection, since it forces the model to pick exactly one relevant feature among many.

A less explored choice could be  $\coefset_{\rm Disc} = \{ (\hcoef_\ell, \hrhs_{\ell}): \ell=1,\ldots,L\}$. This choice corresponds to the case where a fixed candidate list of hyperplanes is given. For example, this would be the case where separation is limited to univariate branching (as in the previous case) and a finite set of break points for each component of $x_{\di}$ is pre-selected.\\

\noindent{\bf Misclassification Errors}\\

The goal of our approach is to locate the $m$ hyperplanes in the feature space such that they minimize the overall missclassification errors of the points in the training sample with respect to the class assigned to each cell in the arrangement.

Then, our model will compute both the $m$ hyperplanes and an assignment to each cell $c\in \cset$ a class $k(c)$ in $\cset$ such that the misclassification errors are minimized. 

To this end, for each training observation, $x$, and each hyperplane, $H=\{z: \hcoef^{\top} z + \hrhs =0\}$, in the arrangement,  we calculate how far is the observation with respect to the \textit{closer} side of the hyperplane that was assigned to its actual class. Specifically, the error is computed as $e = \max \{0, 1 - \sign(\hcoef^{\top} x + \hrhs) (\hcoef^{\top} x + \hrhs)\}$. This error is summed over the different hyperplanes defining a cell. In addition, since in our model multiple cells may be assigned as the class of $x$, this error is defined to be the smallest error among cells that match the class of $x$.

This definition of misclassification errors is aligned with the ``margin'' error philosophy of support vector machines, differentiating it, for example, from an error metric which instead counts the number of training points which are misclassified.\\

\noindent{\bf Classification Rule}\\

Once the coefficients and intercepts of the $m$ hyperplanes are computed, $(\hcoef_r, \hrhs_\hi)$, for $r\in \hset$, as well as the assignment to each cell in $\cset$ to a class through the mapping $z: \cset \rightarrow \classes$, we derive the decision rule for the feature space, $\Psi: \R^d \rightarrow \classes$ based on these values. That is, for a given $x \in \R^d$, the sign-pattern of $x$, with respect the $m$ hyperplanes is computed, $c(x)$, and the assignment based on $z$ is then applied, being $\Psi(x) = z(c(x))$. Nevertheless, since the hyperplanes and the assignment will be computed based on the training sample, the assignment of cells with no training observations inside is arbitrary, and in that case, a more accurate assignment can be done by allocating $x$ to the cell and class with the smallest global misclassification error. It can be done by computing a cell in $\cset$ minimizing the misclassification error, that is:
$$
\bar c \in \arg\min_{c\in \cset} \sum_{r\in M} c_r (\hcoef_\hi^{\top} x + \hrhs_\hi),
$$
and assign $\Psi(x) = z(\bar c)$.

With all these ingredients, we derive a methodology to construct, by means of a mathematical optimization model, accurate classification tools able to capture intricate multiclass instances. The key idea of our approach is illustrated in the following example.

\begin{example}\label{ex:1}
    Let us consider the two-dimensional training instance shown in Figure~\ref{ex:data}, where the three classes are highlighted in different colors. Figure~\ref{ex:sol} presents three different solutions obtained to classify this dataset. Each region of the plane is filled with the color of the class assigned to it.

In the left plot, we show the solution obtained with our methodology. Here, two hyperplanes generate four polyhedral regions in the plane, each assigned to a class, which in this case perfectly match the true classes of the given instance.

In the center plot, we display the solutions obtained with OVO, OVR, and CS approaches for linear separation of the classes. In this case, these popular SVM-based heuristics fail to capture the geometric structure of the data.

Finally, in the right plot, we show the solutions of the same three approaches when using RBF kernels (nonlinear separators). This is the only way to correctly classify the data, although it requires tuning additional hyperparameters and increases the risk of overfitting.

    \begin{figure}[h]
    \centering
 \fbox{\includegraphics[width=.7\linewidth]{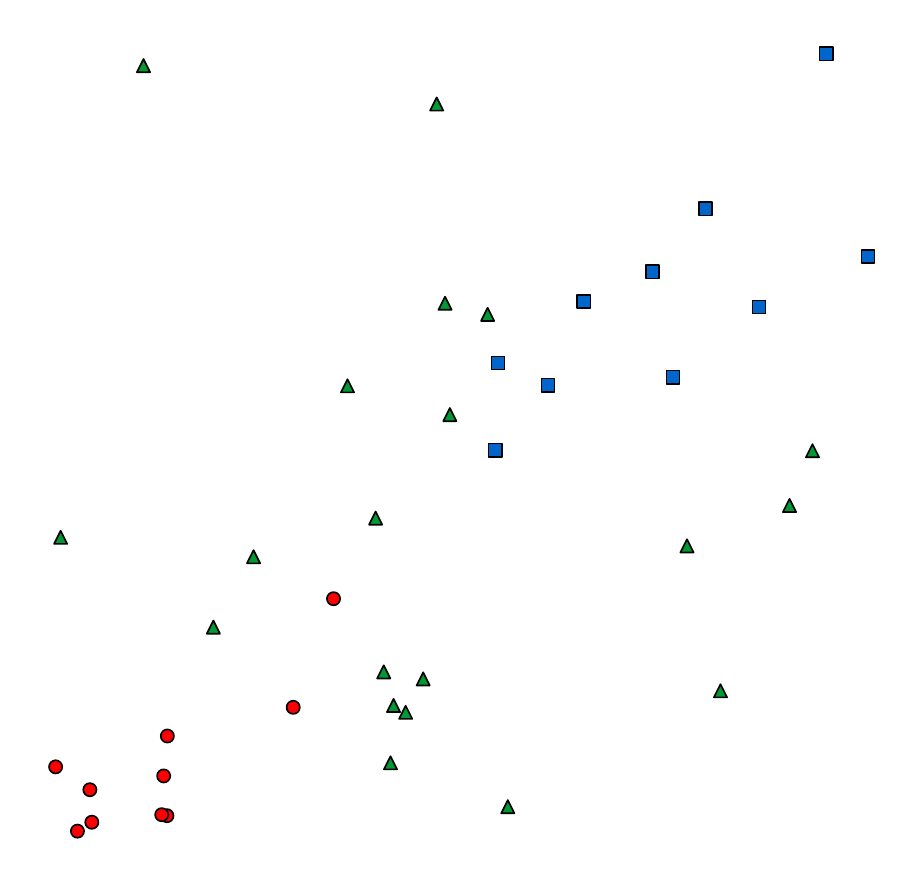}}
    \caption{Dataset to illustrate the methodology.\label{ex:data}}
    \end{figure}

    \begin{figure}[h]
    \centering
 \includegraphics[width=.32\linewidth]{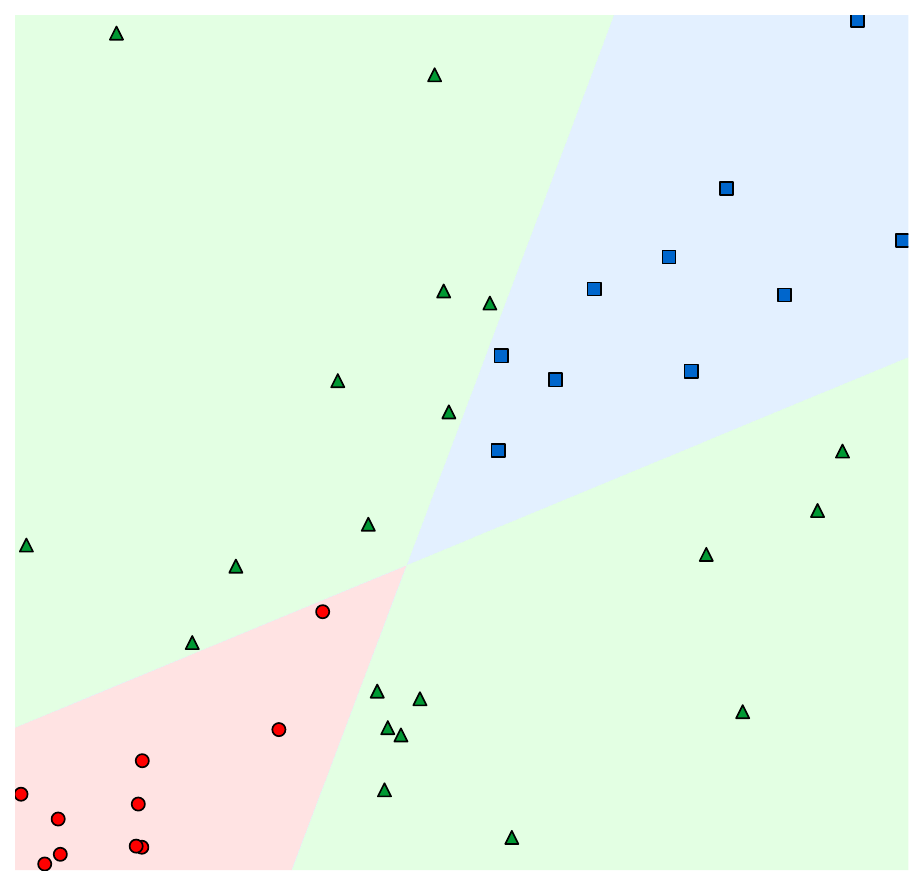}~ \includegraphics[width=.32\linewidth]{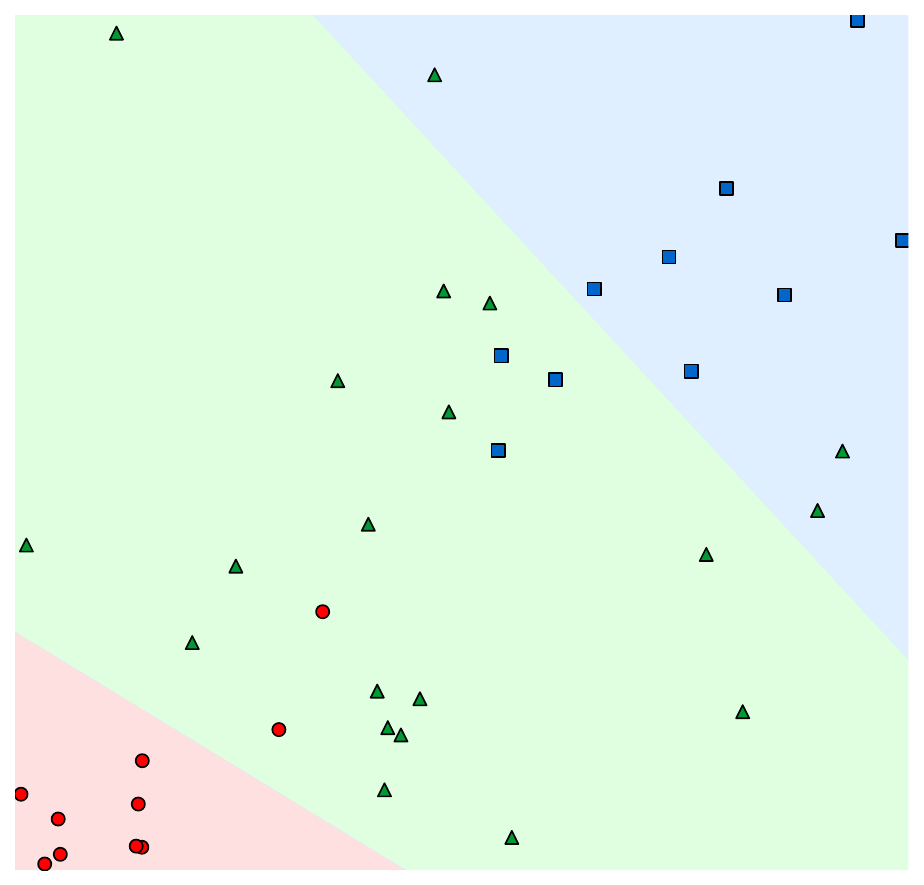}~ \includegraphics[width=.32\linewidth]{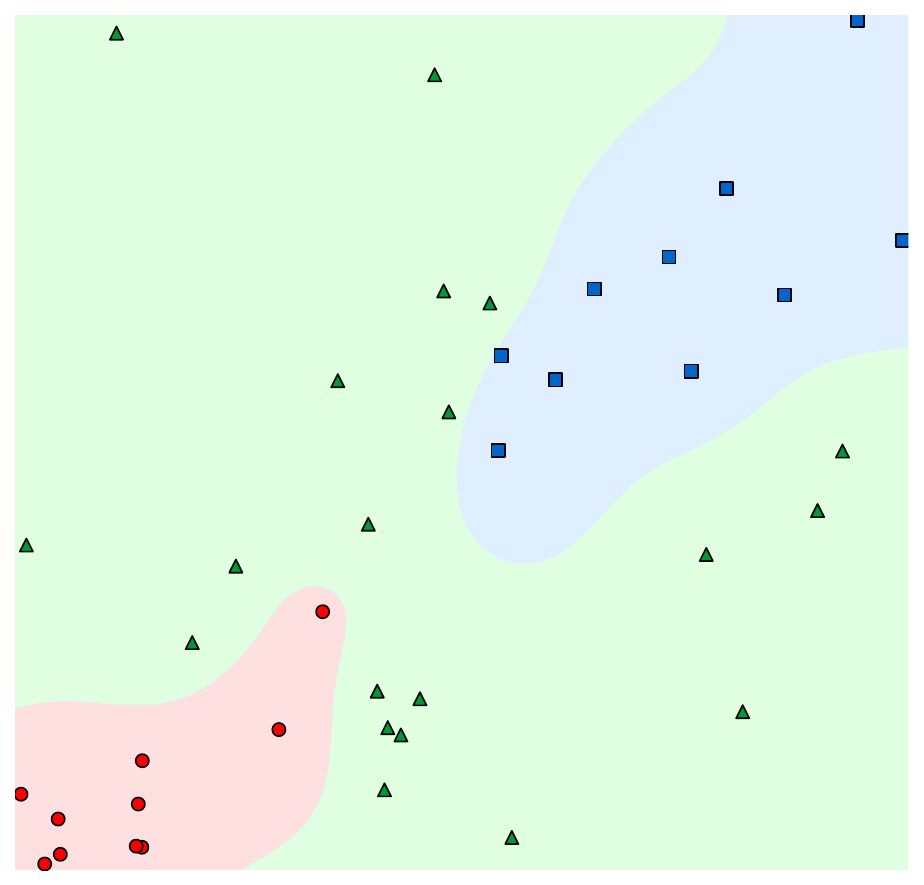}
    \caption{Solutions obtained with our methodology (left), classical linear SVM-based  methods (center), and rbf SVM-based methods .\label{ex:sol}}
    \end{figure}
    
\end{example}

\section{Mathematical Optimization Models}\label{sec:2}

In this section we provide two mathematical optimization formulations to construct an arrangement of hyperplanes that minimizes the sum of hinge losses over the training data.

\subsection{Prior Formulation}

Before presenting our proposal, we summarize the model introduced in \cite{BlancoJaponPuerto_ADAC2020}, since both approaches share the same underlying philosophy, and this comparison provides a fair basis for evaluating the advantages of our formulation. 

Table~\ref{vars:BJP20} lists the decision variables used in that model. The formulation determines the coefficients and intercepts of the hyperplanes, the assignment of training observations to classes, and several auxiliary variables that capture different types of misclassification errors (such as in-margin and out-margin errors), the correctness of the classification, and the identification of representative elements within each cell.

\begin{table}[h]
\begin{tabular}{|cp{9cm}|}\hline
\multicolumn{2}{|c|}{Continuous Variables}\\\hline
$\hcoef_r \in \R^d$, $\hrhs_\hi \in \R$ & Coefficients and intercept of the $r$th hyperplane, $\forall r \in \hset$.\\
$e^O_{ir} \geq 0$ & Out-margin error of observation $i$ w.r.t. hyperplane $r$, $\forall i \in \dset, r \in \hset$.\\
$e^I_{ir} \geq 0$ & In-margin error of observation $i$ w.r.t. hyperplane $r$, $\forall i \in \dset, r \in \hset$.\\
$\theta \geq 0$ & $\max_{r\in M}\|\hcoef_r\|^2$.\\\hline
\multicolumn{2}{|c|}{Binary Variables}\\\hline
$\sensel_{\hi\di}$ & {$1$ if $\hcoef_r^{\top} x_i+\hrhs_\hi \geq 0$, $\forall i \in \dset, r \in \hset$.}\\
$z_{i\ki}$ &  {$1$ if $i$ is assigned to class $s$, $\forall i \in \dset, \ki \in \classes$.}\\
$\xi_i$ &  {$0$ if $i$ is well-classified, $\forall i \in \dset$.}\\
$h_{ij}$ & {$1$ if $x_j$, which is well classified and verifies $y_j=y_i$, is the representative of $x_i$  in its closest cell  through hyperplanes, $\forall i, j \in \dset$.}\\\hline

\end{tabular}
\caption{Decision Variables in the model proposed in \protect\cite{BlancoJaponPuerto_ADAC2020}\label{vars:BJP20}}
\end{table}

Then, the model proposed in \cite{BlancoJaponPuerto_ADAC2020} combines the maximization of the minimum hyperplane margin (among the $m$) with the minimization of two types of errors: the so-called out-margin errors ($d$-variables), which account for observations that, being outside the margin of the hyperplane, are incorrectly classified, and the in-margin errors ($e$-variables), which penalize being inside the margins of the different hyperplanes. The trade-off between these three components is controlled by the parameters $C_1$ (cost by unit of in-margin error) and $C_2$ (cost by unit of out-margin error) that are tuned in the training phase of the approach.

The complete model that the author propose reads as follows:
\begin{subequations}
\begin{align}
\min & \;\; \theta+ C_1 \dsum_{i\in \dset} \dsum_{r\in M} e^O_{ir}+   C_2\dsum_{i\in \dset} \dsum_{r\in M} e^I_{ir} \label{model}\tag{${\rm BJP20}$}\\
\mbox{s.t. } & \theta \geq \|\hcoef_r\|^2, &\forall r\in M,\label{q1}\\
&\hcoef_{r}^{\top}x_i + \hcoef_{r_0} \geq - T (1-\sensel_{\hi\di}), &\forall i\in \dset, r \in M,\label{t+}\\
& \hcoef_{r}^{\top}x_i + \hcoef_{r_0}  \leq T \sensel_{\hi\di}, &\forall i\in \dset, r \in M,\label{t-}\\
&\dsum_{k \in \classes} z_{ik}=1, &\forall i\in \dset,\label{q2}\\
& \|z_i - z_j\|_1 \leq 2 \|t_i-t_j\|_1, &\forall i, j\in \dset,\label{q3}\\
& \dsum_{j \in \dset:\atop y_i=y_j} h_{ij}=1, &\forall i\in \dset,\label{q6}\\
& \xi_j+h_{ij} \leq 1,  &\forall i, j\in \dset (y_i=y_j),\label{q7}\\
& h_{ii} = 1- \xi_i,   &\forall i\in \dset,\label{q7a}\\
&\hcoef_{r}^{\top} x_i + \hrhs_\hi \geq 1-e^I_{ir}- T\;(3-\sensel_{\hi\di}-t_{rj}-h_{ij}), &\forall r \in M,\label{q4}\\
& \hcoef_{r}^{\top} x_i +\hrhs_\hi \leq -1 +e^I_{ir}+ T\; (1+\sensel_{\hi\di}+t_{rj}-h_{ij}),& \forall r\in M,\label{q5}\\
& e^O_{ir} \geq 1- \hcoef^{\top}_r x_i -\hrhs_\hi - T\,(2+\sensel_{\hi\di}-t_{rj}-h_{ij}),& \forall i,j\in \dset (y_i=y_j), r\in M,\label{q8}\\
& e^O_{ir} \geq 1+ \hcoef^{\top}_r x_i + \hrhs_\hi - T\,(2-\sensel_{\hi\di}+t_{rj}-h_{ij}),& \forall i,j\in \dset (y_i=y_j), r\in M,\label{q9}\\
& \hcoef_r \in \R^p, \hrhs_\hi \in \R,&\forall r \in M,\nonumber\\
& e^O_{ir}, e^I_{ir} \geq 0,  \sensel_{\hi\di} \in \{0,1\}, &\forall i\in \dset, r\in M,\nonumber\\
& h_{ij} \in \{0,1\}, &\forall i, j \in \dset,\nonumber\\
& z_{ik} \in \{0,1\},& \forall i\in \dset, k\in \classes,\nonumber\\
& \xi_i \in \{0,1\}, & \forall i\in \dset, \nonumber
\end{align}
\end{subequations}
 where $T$ is a big enough constant that can be accurately estimated based on the training sample. There, as mentioned, the objective function combines the margin, the in-margin and the out-margin errors. Constraints \eqref{q1} assure the encoding in $\theta$ of the hyperplane with minimum margin. Constraints \eqref{t+} and \eqref{t-} correctly define the $t$-variables. Constraints \eqref{q2} assure that a single class is assigned to each training observation. In Constraints \eqref{q3}, the assignment of the same class to the same sign-patterns with respect to all hyperplanes is ensured. Constraints \eqref{q6} impose a single assignment between observations belonging to the same class. Constraints \eqref{q7} avoid choosing wrong-classified representative observations. The set of constraints \eqref{q7a} enforces well-classified observations to be represented by themselves. Constraints \eqref{q4}-\eqref{q9} assure the adequate definition of the in-margin and out-margin errors.

Note that the solution of the above model provides the \textit{optimal} coefficient and intercept of the $m$ hyperplanes and the assignment of each observation to a class. Thus, after solving the problem, in order to classify out-of-sample observations that are not in cells with training observations inside.

The main drawbacks of the model in \cite{BlancoJaponPuerto_ADAC2020} is its number of binary variables, $n^2 + nm + nk + n$, which can be prohibitive for large datasets, and the two hyperparameters, $C_1$ and $C_2$, that must be tuned in the training phase, apart from those that are induced by the kernel function, in case nonlinear separators are desired. In what follows we derive an alternative formulation for the problem that only requires a single hyperparameter, and that for a small number of hyperplanes (as is typical), requires significantly fewer binary variables, allowing solving the problem for larger datasets.\\

\subsection{New Formulation}

We propose an alternative mathematical optimization model for the problem, which follows the same paradigm as the previous one but exhibits improved computational performance and naturally extends to other structural variants (as discussed in Sections~\ref{sec:tree} and~\ref{sec:3}). As in the earlier formulation, the model determines the coefficients and intercepts of the $m$ hyperplanes; however, instead of assigning training points directly to classes, it assigns the cells induced by the arrangement of hyperplanes to classes. To this end, we introduce the following sets of decision variables:\\

\begin{framed}
\noindent$\hcoef_\hi \in \R^d, \hrhs_\hi \in \R$: Coefficients and intercept of the $\hi$ hyperplane, $\forall \hi \in \hset$.
\\

\noindent$
\cclass_{\ci \ki} \in \{0,1\}$:  Takes value $1$ if cell $\ci$ is assigned to class $\ki$, equal to $0$ otherwise, $\forall \ci \in \cset, \ki \in \classes$. \\

\noindent$
\assign_{\di \ci} \in \{0,1\}$: Takes value $1$ if data point $\di$ is assigned to cell $\ci$, and $0$ otherwise, $\forall \di \in \dset, \ci \in \cset$.\\

\noindent$\sensel_{\hi \di} \in \{0,1\}$: Takes value $1$ if data point $\di$ is assigned to the left branch of split $\hi$ and equal to $0$ otherwise, $\forall \hi \in \hset, \di \in \dset$.
\\

\noindent$\senser_{\hi \di} \in \{0,1\}$: Takes value $1$ if data point $\di$ is assigned to the right branch of split $\hi$ and equal to $0$ otherwise, $\forall \hi \in \hset, \di \in \dset$.
\\

\noindent$\err_{\di \hi} \in \R_+$: Hinge loss error of observation $i$ with respect to split $\hi$, $\forall \hi \in \hset, \di \in \dset$. 
\end{framed}

The variables introduced above are linked through the following linear constraints, which ensure their correct definition and guarantee the proper functioning of the model.

\begin{itemize}
\item Each training observation is assigned to a single sign-pattern with respect to the hyperplanes:
$$
\sum_{\ci \in \cset} \assign_{\di \ci} = 1,  \forall \di \in \dset.
$$
\item Each cell must have exactly one class assigned to it:
$$
\sum_{\ki \in \classes} \cclass_{\ci \ki} = 1, \forall \ci \in \cset.
$$
\item Each training point $\di$ must be assigned to a cell whose assigned class matches the true class $\dclass_{\di}$ of $\di$: 
$$
\assign_{\di \ci} \leq \cclass_{\ci \dclass_{\di}},  \forall \di \in \dset, \ci \in \cset .
$$
That is, in case cell $\ci$ is not assigned to the class of observation $\ci$, the observation is not allowed to be allocated to that cell. Note that in case the observation is geometrically positioned in a cell whose assigned class is not its class, a misclassification error will be accounted via the variables $\err_{\di \hi}$..
\item The following constraints assure the variables $\sensel_{\hi \di}$ and $\senser_{\hi \di}$ correctly record whether data point $\di$ is on the left or right side of hyperplane $\hi$, as determined by the cell the data point is assigned to:
\begin{align*}
\sensel_{\hi \di} &= \sum_{\ci \in \csetl_{\hi}} \assign_{\di \ci}, \forall \di \in \dset, \hi \in \hset, \\
\senser_{\hi \di} &= \sum_{\ci \in \csetr_{\hi}} \assign_{\di \ci}, \forall \di \in \dset, \hi \in \hset.
\end{align*}
\item The following constraints record the misclassification errors for each data point and each hyperplane:
\begin{align*}
\err_{\di \hi} &\geq 1 - (\hcoef_{\hi}^{\top} x_i + \hrhs_{\hi}) - \Delta_{\di} (1-\sensel_{\hi \di}),  \forall \di \in \dset, \hi \in \hset, \\
\err_{\di \hi} &\geq 1 + (\hcoef_{\hi}^{\top} x_i + \hrhs_{\hi}) - \Delta_{\di}(1- \senser_{\hi \di}), \forall \di \in \dset, \hi \in \hset, \\
\err_{\di \hi} &\geq 0, \forall \di \in \dset, \hi \in \hset.
\end{align*}
Here, $\Delta_\di$ is a large enough constant to be specified later. The first two constraints account  for the right and left branch misclassification errors with respect to all the $m$ hyperplanes. For each hyperplane, in case it is positioned in the left (resp. right) branch, the error measure is calculated with respect to the margin induced by the left (resp. right) margin of such an hyperplane. The overall sum of these errors will be minimized in the objective function of the model, and then reaching with equality one of the two above constraints. Since a data point $\di$ will have either $\sensel_{\hi \di} = 0$ or $\senser_{\hi \di} = 0$, only one of these constraints will be active. Furthermore, if the right-hand side or the associated constraint does not exceed zero, then the error variable $\err_{\di \hi}$ will only be constrained by $\err_{\di \hi} \geq 0$ (and hence will equal zero in an optimal solution).

\item Constraints on hyperplane coefficients and intercepts:
$$
(\hcoef_\hi, \hrhs_\hi) \in \coefset, \forall \hi \in \hset.
$$
Possible options for the form of the set $\coefset$ are discussed in Section~\ref{sec:1}. For norm-constrained sets (as motivated by SVM margin rationale), they lead to conic constraints (second-order cones or $p$-order cones), whereas for discrete sets the conditions can be imposed through linear constraints with additional binary variables (to determine which coefficients are selected from the potential set). 
\end{itemize}

Summarizing all the above, the resulting mathematical optimization formulation that we propose for Multiclass Classification with Hyperplane Arrangements is the following:
\begin{subequations}
\label{eq:form}
\begin{align}
 \min &\;\; \sum_{\hi \in \hset} \sum_{\di \in \dset} \err_{\di \hi} \\
    \mbox{s.t. } & \sum_{\ci \in \cset} \assign_{\di \ci} = 1, && \forall \di \in \dset, \label{f2:1}\\
    & \sum_{\ki \in \classes} \cclass_{\ci \ki} = 1, && \forall \ci \in \cset, \label{f2:2}\\
&  \assign_{\di \ci} \leq \cclass_{\ci \dclass_{\di}}, && \forall \di \in \dset, \ci \in \cset , \label{f2:3}\\
& \sensel_{\hi \di} = \sum_{\ci \in \csetl_{\hi}} \assign_{\di \ci} && \forall \di \in \dset, \hi \in \hset , \label{f2:4a}\\
& \senser_{\hi \di} = \sum_{\ci \in \csetr_{\hi}} \assign_{\di \ci}, && \forall \di \in \dset, \hi \in \hset,  \label{f2:4b}\\
& \err_{\di \hi} \geq 1 - (\hcoef_{\hi}^{\top} x_i + \hrhs_{\hi}) - \Delta_{\di} (1-\sensel_{\hi \di}),  &\quad& \forall \di \in \dset, \hi \in \hset,  \label{f2:5}\\
& \err_{\di \hi} \geq 1 + (\hcoef_{\hi}^{\top} x_i + \hrhs_{\hi}) - \Delta_{\di}(1- \senser_{\hi \di}), &\quad& \forall \di \in \dset, \hi \in \hset,  \label{f2:6}\\
& \err_{\di \hi} \geq 0, && \forall \di \in \dset, \hi \in \hset, \\
& (\hcoef_{\hi},\hrhs_{\hi}) \in \coefset, && \forall \hi \in \hset, \label{a:set}\\
& \assign_{\di \ci} \in \{0,1\}, \cclass_{\ci \ki} \in \{0,1\}, \label{vz:binary}\\
& \sensel_{\hi\di}, \senser_{\hi\di} \in \{0,1\}, \forall \hi\in \hset, \di \in \dset.\label{tu:binary}
\end{align}
\end{subequations}

The following observation about the above formulation is useful computationally. 

\begin{prop}
    Integrality constraints \eqref{tu:binary} can be relaxed.
\end{prop}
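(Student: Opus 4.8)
The plan is to show that the branch-indicator variables $\sensel$ and $\senser$ are completely pinned down by the assignment variables $\assign$, which we do \emph{not} relax, so that dropping \eqref{tu:binary} cannot enlarge the feasible region. The argument rests on one structural fact together with the two defining equalities \eqref{f2:4a}--\eqref{f2:4b}, and involves no estimation of big-$M$ constants or optimality arguments.

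First, I would record the structural observation that, for each hyperplane $\hi \in \hset$, the cell sets $\csetl_\hi$ and $\csetr_\hi$ partition $\cset$. This is immediate from the definition of a cell as a sign pattern: every $\ci \in \cset = \{-1,1\}^m$ has a well-defined sign $\ci_\hi \in \{-1,+1\}$ in coordinate $\hi$, and $\ci$ belongs to $\csetl_\hi$ precisely when $\ci_\hi = +1$ and to $\csetr_\hi$ precisely when $\ci_\hi = -1$. Hence $\csetl_\hi \cap \csetr_\hi = \emptyset$ and $\csetl_\hi \cup \csetr_\hi = \cset$.

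Next, I would fix any point feasible for the formulation \eqref{eq:form} with \eqref{tu:binary} relaxed to $0 \le \sensel_{\hi\di}, \senser_{\hi\di} \le 1$. Since \eqref{vz:binary} keeps $\assign_{\di\ci} \in \{0,1\}$ and \eqref{f2:1} forces $\sum_{\ci\in\cset} \assign_{\di\ci} = 1$, each observation $\di$ is assigned to exactly one cell, say $\ci(\di)$. The defining equality \eqref{f2:4a} then gives $\sensel_{\hi\di} = \sum_{\ci\in\csetl_\hi}\assign_{\di\ci}$, which by the partition above equals $1$ if $\ci(\di) \in \csetl_\hi$ and $0$ otherwise; symmetrically \eqref{f2:4b} yields $\senser_{\hi\di} \in \{0,1\}$, with $\sensel_{\hi\di}+\senser_{\hi\di} = \sum_{\ci\in\cset}\assign_{\di\ci}=1$. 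Thus both branch indicators are automatically integer at every feasible point of the relaxed problem.

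Consequently, the feasible region of the relaxed model coincides exactly with that of \eqref{eq:form}, so in particular the optimal values agree and the relaxation is exact. I do not anticipate any real obstacle in this argument: the only step that warrants explicit justification is the partition property of $\csetl_\hi$ and $\csetr_\hi$, which follows directly from the sign-pattern encoding of cells, and everything else is a direct substitution into the equality constraints \eqref{f2:4a}--\eqref{f2:4b}.
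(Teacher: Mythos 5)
Your argument is correct and is essentially the paper's own proof: both rest on the equalities \eqref{f2:4a}--\eqref{f2:4b} together with the integrality of $\assign$ and the fact that each observation is assigned to exactly one cell, forcing $\sensel_{\hi\di}$ and $\senser_{\hi\di}$ to be binary at every feasible point. You simply spell out the partition property of $\csetl_\hi$ and $\csetr_\hi$ that the paper leaves implicit, which is a welcome clarification but not a different route.
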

\begin{proof}
    It follows straightforward by constraints \eqref{f2:4a} and \eqref{f2:4b} that assure that these variables only take binary values by the integrality of $v$.
\end{proof}

With this observation, the number of binary variables in the problem is $(n + k)|\cset|$. Since the cardinality of $\cset$ is $2^m$, for a fixed (and small) number of hyperplanes the number of binary variables grows linearly with the number of data points and the number of classes. This is much smaller than in the formulation proposed in \cite{BlancoJaponPuerto_ADAC2020}, which requires $n^2 + nm + nk + n$ binary variables. As we  see in our computational experiments, this results in a clear computational advantage.

We next present a derivation of sufficiently large coefficients $\Delta_i$ for use in constraints \eqref{f2:5}-\eqref{f2:6} in the case that the set $\coefset$ has bounded norm.

\begin{prop}
    Assume that $\coefset \subseteq \{ (\hcoef,\hrhs): \| \hcoef \| \leq \maxnorm \}$, where $\| \cdot \|$ is any $p$-norm. Then, for each $\di \in \dset$:
    $$
    \widehat{\Delta}_i = \maxnorm \| x_i \|_* + 2 + \max \{ \maxnorm \| x_i \|_*: i \in \dset\}
    $$
    is a valid value for $\Delta_\di$ in constraints \eqref{f2:5} and \eqref{f2:6}, for all $\di \in \dset$, where $\| \cdot \|_*$ denotes the dual norm of $\| \cdot \|$.
\end{prop}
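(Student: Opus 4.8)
The plan is to read $\widehat{\Delta}_i$ as a big-$M$ constant and to show it is large enough that, for every data point $\di$ and hyperplane $\hi$, the one of constraints \eqref{f2:5}--\eqref{f2:6} associated with the branch to which $\di$ is \emph{not} assigned becomes redundant, so that $\err_{\di\hi}$ records the intended hinge loss. Since \eqref{f2:4a}--\eqref{f2:4b} together with \eqref{f2:1} force exactly one of $\sensel_{\hi\di},\senser_{\hi\di}$ to equal $1$, I would reduce the whole statement to a single sufficient condition: the right-hand side of the inactive constraint should be nonpositive, since $\err_{\di\hi}\ge 0$ then renders that constraint vacuous. Concretely, when $\sensel_{\hi\di}=1$ the inactive constraint is \eqref{f2:6}, whose right-hand side is $1+(\hcoef_\hi^{\top}x_\di+\hrhs_\hi)-\Delta_\di$, and symmetrically when $\senser_{\hi\di}=1$; hence it suffices to establish $\Delta_\di \ge 1+|\hcoef_\hi^{\top}x_\di+\hrhs_\hi|$ for every hyperplane arising in some optimal solution.

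Next I would bound the two pieces of $|\hcoef_\hi^{\top}x_\di+\hrhs_\hi|$ separately. For the linear part, the membership $(\hcoef_\hi,\hrhs_\hi)\in\coefset$ in \eqref{a:set} gives $\|\hcoef_\hi\|\le\maxnorm$, so H\"{o}lder's inequality for dual norms yields $|\hcoef_\hi^{\top}x_\di|\le\maxnorm\|x_\di\|_*$, which is exactly the first term of $\widehat{\Delta}_i$. It then remains to dominate $1+|\hrhs_\hi|$ by $2+\max_{j}\maxnorm\|x_j\|_*$, that is, to show that the intercept may be taken to satisfy $|\hrhs_\hi|\le 1+\max_{j\in\dset}\maxnorm\|x_j\|_*$.

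The hard part is precisely this intercept bound, because $\coefset$ constrains only the norm of $\hcoef_\hi$ and leaves $\hrhs_\hi$ free. The plan is an exchange argument showing that there is an optimal solution meeting the bound. Suppose $\hrhs_\hi > 1+\max_{j}\maxnorm\|x_j\|_*$ in an optimal solution; I would lower $\hrhs_\hi$ to the boundary value $1+\max_{j}\maxnorm\|x_j\|_*$, keeping every other variable (in particular all $\assign_{\di\ci}$, and hence all $\sensel,\senser$) fixed. Each point assigned to the left branch satisfies $\hcoef_\hi^{\top}x_\di+\hrhs_\hi\ge -\maxnorm\|x_\di\|_*+1+\max_{j}\maxnorm\|x_j\|_*\ge 1$ both before and after, so its right-hand side in \eqref{f2:5} remains nonpositive and its error stays at zero; each point assigned to the right branch has its \eqref{f2:6} bound $1+(\hcoef_\hi^{\top}x_\di+\hrhs_\hi)$ nondecreasing in $\hrhs_\hi$, so lowering the intercept cannot increase its error. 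Thus the objective does not increase, and feasibility is preserved because $\coefset$ membership is independent of $\hrhs_\hi$. The case $\hrhs_\hi<-(1+\max_{j}\maxnorm\|x_j\|_*)$ is symmetric, raising the intercept to the lower boundary. Applying this to every hyperplane yields an optimal solution obeying the intercept bound.

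Finally I would assemble the estimate on this truncated optimal solution: $1+|\hcoef_\hi^{\top}x_\di+\hrhs_\hi|\le 1+\maxnorm\|x_\di\|_*+|\hrhs_\hi|\le \maxnorm\|x_\di\|_*+2+\max_{j}\maxnorm\|x_j\|_*=\widehat{\Delta}_i$, which is the inequality demanded in the first paragraph; together they show that $\widehat{\Delta}_i$ linearizes the hinge loss exactly, without cutting off the optimum. I expect the only genuine subtlety to be the exchange argument of the third paragraph --- verifying that truncating an over-large intercept neither raises the objective nor disturbs the cell-to-class assignment --- whereas the two norm estimates are routine applications of H\"{o}lder's inequality.
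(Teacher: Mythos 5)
Your proof is correct and follows essentially the same route as the paper: bound $|\hcoef_\hi^{\top}x_\di|\le \maxnorm\|x_\di\|_*$ via the dual-norm (H\"older) identity, argue that the intercept can be restricted to $|\hrhs_\hi|\le 1+\max_{j\in\dset}\maxnorm\|x_j\|_*$, and sum the two bounds to get $\widehat{\Delta}_\di$. The only difference is one of completeness: the paper merely asserts the intercept restriction (``hence we can restrict $|\hrhs_\hi|\le 1+\bar{\Delta}$''), whereas you justify it with an explicit monotone exchange argument showing that truncating an oversized intercept leaves all assignment variables feasible and cannot increase any hinge error --- a useful elaboration of the same proof rather than a different approach.
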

\begin{proof}
Observe that:
\[ \max\{\hcoef^\top x_i : \| \hcoef \| \leq \maxnorm\} = \max\{ -\hcoef^\top x_i : \| \hcoef \| \leq \maxnorm\}  = \maxnorm \| x_i \|_* .\] 
Next, define $\bar{\Delta}= \max \{ \maxnorm \| x_i \|_*: i \in \dset\}$. Then:
\[1 + |\hcoef^\top x_i | \leq 1 + \bar{\Delta} \] for any feasible $\hcoef$ and data point $x_i$, and hence we can restrict $|\hrhs_\hi| \leq 1 + \bar{\Delta}$. Thus, we derive that
\[ 1 + (\hcoef^\top x_i + b_r) \leq \maxnorm \| x_i \|_* + 2 + \bar{\Delta} =: \Delta_\di \]
and also
\[ 1 - (\hcoef^\top x_i + b_r) \leq \maxnorm \| x_i \|_* + 2 + \bar{\Delta} = \Delta_\di\]
and hence this value can be used for $\Delta_\di$ for each constraint.
\end{proof}

\noindent{\bf Symmetry}

Formulation \eqref{eq:form}
 contains symmetry with respect to permutation of assignment of hyperplanes to splits (and matching re-ordering of cells). Different strategies to mitigate this symmetry can be applied:
\begin{itemize}
    \item A small reduction of symmetry can be accomplished by choosing an arbitrary cell, say $1 \in \cset$, and fixing its class to an arbitrary class, say $1 \in \classes$, i.e., fixing $\cclass_{11} = 1$. We can furthermore pick an arbitrary data point $\di' \in \dset$ having $\dclass_{\di'}=1$ and force this data point to be assigned to this cell, via the constraint $\assign_{\di'1} = 1$. 
\item One can require that the right-hand side of the hyperplanes ($b_r$) be nonnegative in the formulation (one obtain the equivalent hyperplane by changing the signs) and avoid random assignment of \emph{tags} to the hyperplane by imposing that $b_r \geq b_{r+1}$ for all $r = 1,\ldots,m$ and $b_m \geq 0$
\item Some solvers, such as Gurobi or CPLEX, have implemented efficient techniques for symmetry detection, and that can be also useful to avoid extra exploration in the branch-and-bound tree. The amount of effort the solvers spend on their symmetry detection and handling techniques can be controlled with solver settings. For instance, in Gurobi, it may be advantageous to set the parameter \texttt{Symmetry} can be set to aggressive for models that exhibit symmetry.
\end{itemize}

\subsection{Extension to Classification Via Decision Trees}
\label{sec:tree}

The hyperplanes arrangement structure of the problem as described partitions the feature space $\R^d$ by into cells by considering all possible intersections of the half-spaces defined by the selected hyperplanes. A seemingly very different approach is the Optimal Classification Tree \cite{bertsimas2017optimal,aghaei2020learning,blanco2022robust,d2024margin,boutilier2023optimal}, in which the feature space is partitioned by following branches in a tree $\mathcal{T}$, where each branch is defined by a hyperplane. Despite this difference, we discuss in this section how a small modification to formulation \eqref{eq:form} can be used for finding a multivariate classification tree using margin violation as the error metric.

Thus, let $\mathcal{T}$ be a given depth $D$ binary tree with branch nodes $\hset$ and leaf nodes $\cset$. For each branch node $\hi \in \hset$, hyperplane $H_\hi= \{z \in \R^d: \hcoef_\di^{\top} z + \hrhs_\di=0\}$ for each $m \in \hset=\{1, \ldots, m\}$ defines the branch at node $\hi$. For each branch node $\hi$, we define $\cset^+_\hi \subseteq \cset$ as the set of leaf nodes which are descendant from the left branch of node $\hi$, and $\cset^+_\hi \subseteq \cset$ as the set of leaf nodes which are descendant from the right branch of node $\hi$. Note that, different from the pure hyperplanes arrangement structure, with the exception of the root node it holds that $\cset^+_\hi \cup \cset^-_\hi \subsetneq \cset$.

With this modification of the definition of the sets $\cset^+_\hi$ and $\cset^-_{\hi}$ the formulation \eqref{eq:form} assigns a class to each leaf of the tree, assigns each data point to a leaf with matching class, and measures the error of assignments by summing the margin errors of each branch from the root of the tree to the leaf node. 
The error we use in this formulation is similar to margin error used in \cite{d2024margin}, with the significant exception that in our formulation we use the margin error for all branch nodes in the tree, whereas in \cite{d2024margin} the margin error is only used in the branch nodes directly above the leaf nodes.

Extensions to existing optimal classification tree formulations, such as penalizing the number of branches that are used in the tree, can be naturally adapted in this formulation.

\section{A Kernelized Version}\label{sec:3}

Kernel methods play a fundamental role in supervised classification, particularly within the framework of Support Vector Machines (SVMs). By implicitly mapping input data into high-dimensional feature spaces through kernel functions, these methods enable linear classifiers to capture complex, nonlinear decision boundaries in the original input space \cite{scholkopf2002learning,cristianini2000introduction}. This capability is crucial in domains where linear separation is not feasible without transformation. The kernel trick allows such transformations without explicitly computing the high-dimensional embeddings, making classifiers both efficient and powerful in practice. Common kernels, such as the radial basis function (RBF), polynomial, and sigmoid kernels, provide flexibility in adapting to diverse data distributions (see, e.g., \cite{svm}). 

The idea behind kernel methods is to provide a classifier for transformed data through a function $\phi: \R^d \rightarrow \R^{D}$ from the feature space to a (possibly) higher dimensional space. Then, through the use of kernels, generalized products in the shape of $\K(x,x') = \phi(x)^{\top} \phi(x')$, one can obtain the separator. Further details can be found in \cite{svm}.

We discuss how our proposed formulation can be adapted to use a given kernel function $\mathcal{K}: \R^d \times \R^d \rightarrow \R$. The following result provides a formulation for constructing nonlinear separators for Euclidean SVM-based hyperplane coefficients (in the $\phi$-projected space) in the form:
$$
\coefset = \{(\hcoef, \hrhs) \in \R^D: \|\hcoef\|_2 \leq \kappa\}
$$
for a given value of $\kappa>0$. As in our original formulation, the value of $\kappa$ may be considered a hyperparameter of the model.

\begin{theorem}\label{theo:kernel}
    Let $\mathbb{K} = (\K(x_i,x_j))_{i,j \in \dset}$ be the Gram matrix of the kernel function for the training observations and let 
    $\mathbb{K}^\dagger$ be the Moore-Penrose pseudoinverse of  $\mathbb{K}$~(see, e.g., \cite{boyd2004convex}). Then, the following mathematical optimization model allows the construction of kernel-based non linear separators:
    \begin{subequations}
\label{eq:kernel}
    \begin{align}
\min & \;\;\; \sum_{\hi \in \hset} \dsum_{\di\in \dset} \err_{\di\hi} \label{fk:obj}\\
  \mbox{s.t. } & \sum_{\ci \in \cset} \assign_{\di \ci} = 1, && \forall \di \in \dset, \label{fk:1}\\
    & \sum_{\ki \in \classes} \cclass_{\ci \ki} = 1, && \forall \ci \in \cset, \label{fk:2}\\
&  \assign_{\di \ci} \leq \cclass_{\ci \dclass_{\di}}, && \forall \di \in \dset, \ci \in \cset , \label{fk:3}\\
& \sensel_{\hi \di} = \sum_{\ci \in \csetl_{\hi}} \assign_{\di \ci}, && \forall \di \in \dset, \hi \in \hset,  \label{fk:4a}\\
& \senser_{\hi \di} = \sum_{\ci \in \csetr_{\hi}} \assign_{\di \ci}, && \forall \di \in \dset, \hi \in \hset,  \label{fk:4b}\\
& \err_{\di\hi} \geq 1-\lambda_{\di\hi} - \hrhs_\hi - \Delta_\di (1-\sensel_{\hi\di}), &&\forall \di\in \dset,\\
&\err_{\di\hi} \geq 1+\lambda_{\di\hi} + \hrhs_\hi - \Delta_\di (1-\senser_{\hi\di}), &&\forall i\in \dset,\\
& \mathbb{K}\mathbb{K}^\dagger \lambda_\hi = \lambda_\hi, &&\forall \hi \in \hset, \label{eq:lambcond} \\
& \sqrt{\lambda_\hi^\top \mathbb{K}^{\dagger} \lambda_{\hi}} \leq \kappa, &&\forall \hi \in \hset, \label{eq:matnorm} \\
& \lambda_\hi \in \R^n, \hrhs_\hi \in \R, && \forall \hi \in \hset,\\
& \assign_{\di \ci} \in \{0,1\}, && \forall \di \in \dset, \ci \in \cset, \\
& \cclass_{\ci \ki} \in \{0,1\}, && \forall \ci \in \cset, \ki \in \classes, \label{vzk:binary}\\
& \sensel_{\hi\di}, \senser_{\hi\di}, \err_{\di\hi} \geq 0, && \forall \hi\in \hset, \di \in \dset.\label{tuk:binary}
\end{align}
\end{subequations}
\end{theorem}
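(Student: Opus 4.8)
The plan is to show that model \eqref{eq:kernel} is nothing but formulation \eqref{eq:form} written in the lifted feature space $\phi(\R^d)\subseteq\R^D$ with $\coefset=\{(\hcoef,\hrhs)\in\R^D\times\R:\|\hcoef\|_2\leq\maxnorm\}$, after a representer-theorem reduction followed by the change of variables $\lambda_\hi=\mathbb{K}\alpha_\hi$. First I would write out the lifted model: replacing $x_i$ by $\phi(x_i)$ everywhere, the coefficients $\hcoef_\hi\in\R^D$ enter only in the margin-error constraints \eqref{f2:5}--\eqref{f2:6}, through the inner products $\hcoef_\hi^\top\phi(x_i)$, and in the norm constraint $\|\hcoef_\hi\|_2\leq\maxnorm$. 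Since $D$ may be very large (or infinite), the entire task is to re-express both of these using only the Gram matrix $\mathbb{K}=(\K(x_i,x_j))_{i,j\in\dset}$.

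The first key step is the representer reduction. Let $\Phi\in\R^{n\times D}$ have rows $\phi(x_j)^\top$, so that $\Phi\Phi^\top=\mathbb{K}$. I would decompose each optimal coefficient vector as $\hcoef_\hi=\hcoef_\hi^{\parallel}+\hcoef_\hi^{\perp}$, with $\hcoef_\hi^{\parallel}\in\mathrm{span}\{\phi(x_j):j\in\dset\}$ and $\hcoef_\hi^{\perp}$ orthogonal to that span. The component $\hcoef_\hi^{\perp}$ contributes nothing to any inner product $\hcoef_\hi^\top\phi(x_i)$ while only adding to $\|\hcoef_\hi\|_2^2$; hence there is an optimal solution with $\hcoef_\hi^{\perp}=0$, i.e.\ $\hcoef_\hi=\Phi^\top\alpha_\hi$ for some $\alpha_\hi\in\R^n$. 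Introducing $\lambda_\hi:=\Phi\hcoef_\hi=\mathbb{K}\alpha_\hi\in\R^n$, whose $\di$-th entry $\lambda_{\di\hi}$ equals exactly $\hcoef_\hi^\top\phi(x_i)$, and substituting into \eqref{f2:5}--\eqref{f2:6} reproduces the two margin-error constraints of \eqref{eq:kernel} verbatim.

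It remains to translate the norm constraint, and this is where I expect the \emph{main obstacle} to lie: the Gram matrix $\mathbb{K}$ is in general only positive semidefinite, hence possibly singular, so one cannot simply invert it. I would handle this via the Moore--Penrose pseudoinverse. First, any attainable $\lambda_\hi=\mathbb{K}\alpha_\hi$ lies in the range of $\mathbb{K}$, and membership in that range is equivalent to the projection identity $\mathbb{K}\mathbb{K}^\dagger\lambda_\hi=\lambda_\hi$; this justifies \eqref{eq:lambcond} as the exact characterization of the feasible $\lambda_\hi$. Second, I would verify that on this range $\|\hcoef_\hi\|_2^2=\alpha_\hi^\top\mathbb{K}\alpha_\hi$ depends only on $\lambda_\hi$: two solutions of $\mathbb{K}\alpha_\hi=\lambda_\hi$ differ by an element of $\ker\mathbb{K}$, on which the quadratic form $\alpha\mapsto\alpha^\top\mathbb{K}\alpha$ vanishes, so $\|\hcoef_\hi\|_2$ is well-defined as a function of $\lambda_\hi$. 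Taking the particular solution $\alpha_\hi=\mathbb{K}^\dagger\lambda_\hi$ and using the pseudoinverse identity $\mathbb{K}^\dagger\mathbb{K}\mathbb{K}^\dagger=\mathbb{K}^\dagger$ together with the symmetry of $\mathbb{K}^\dagger$ yields $\alpha_\hi^\top\mathbb{K}\alpha_\hi=\lambda_\hi^\top\mathbb{K}^\dagger\lambda_\hi$. Consequently the constraint $\|\hcoef_\hi\|_2\leq\maxnorm$ becomes $\sqrt{\lambda_\hi^\top\mathbb{K}^\dagger\lambda_\hi}\leq\maxnorm$, which is precisely \eqref{eq:matnorm}, and the equivalence between the lifted instance of \eqref{eq:form} and model \eqref{eq:kernel} follows.
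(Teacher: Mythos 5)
Your proposal is correct, and it reaches the same destination as the paper's proof --- the change of variables $\lambda_\hi = \Phi\hcoef_\hi$, the range characterization $\mathbb{K}\mathbb{K}^\dagger\lambda_\hi = \lambda_\hi$, and the norm identity $\|\hcoef_\hi\|_2^2 = \lambda_\hi^\top\mathbb{K}^\dagger\lambda_\hi$ are exactly the mechanics in the paper --- but it gets there by a genuinely different route at the key step. The paper fixes the binary variables $(z,v,t,u)$, observes that the continuous problem separates across hyperplanes, forms the Lagrangean of each subproblem, and reads off from the KKT stationarity condition $\sum_{\di}(\alpha_\di-\beta_\di)\phi(x_\di) = 2\rho\,\hcoef_\hi$ that an optimal $\hcoef_\hi$ lies in $\mathrm{span}\{\phi(x_1),\ldots,\phi(x_n)\}$. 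You instead use the elementary orthogonal-decomposition (representer) argument: project any \emph{feasible} $\hcoef_\hi$ onto the span, which leaves every inner product $\hcoef_\hi^\top\phi(x_\di)$ --- hence every margin constraint and the objective --- unchanged while only shrinking $\|\hcoef_\hi\|_2$. Your version buys three things. First, it is more robust: the paper's stationarity condition pins $\hcoef_\hi$ to the span only when the multiplier $\rho$ of the norm constraint is nonzero; if that constraint is inactive ($\rho = 0$), the condition degenerates to $\sum_\di(\alpha_\di - \beta_\di)\phi(x_\di) = 0$ and says nothing about $\hcoef_\hi$, a case your projection handles automatically. Second, it avoids duality and the fixing of binaries altogether, so the equivalence holds feasible-point-by-feasible-point for the full MIP rather than only at KKT points of the continuous subproblems, and it works verbatim when $\phi$ maps into an infinite-dimensional Hilbert space. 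Third, you explicitly verify that $\|\hcoef_\hi\|_2$ is a well-defined function of $\lambda_\hi$ (the quadratic form $\alpha \mapsto \alpha^\top\mathbb{K}\alpha$ vanishes on $\ker\mathbb{K}$, and indeed $\mathbb{K}\delta = 0$ forces $\Phi^\top\delta = 0$), a point the paper passes over by writing the norm as a minimum over solutions of $\mathbb{K}q_\hi = \lambda_\hi$. What the paper's Lagrangean derivation buys in exchange is the dual information itself ($0 \le \alpha_\di,\beta_\di \le 1$ and the balance condition), which is suggestive for algorithmic developments, but it is not needed for the equivalence claimed in the theorem.
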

\begin{proof}
Note that the primal kernelized formulation remains the same, except the definition of the error-based constrains \eqref{f2:5} and \eqref{f2:6} and the hyperplanes coefficient constraint. Thus, defining:
$$
\mathcal{X} = \{(z, v, t, u): \text{ verifying} \eqref{f2:1}, \eqref{f2:2}, \eqref{f2:3}, \eqref{f2:4a}, \eqref{f2:4b} \eqref{vz:binary}, \eqref{tu:binary}\},
$$
the transformed problem problem becomes:
\begin{align}
    \min & \; \sum_{\di\in \dset} \sum_{r\in M} e_{\di\hi}\\
    s.t. &\;  \err_{\di \hi} \geq 1 - (\hcoef_{\hi}^{\top} \phi(x_\di) + \hrhs_{\hi}) - \Delta_{\di} (1-\sensel_{\hi \di}), &\quad& \forall \di \in \dset, \hi \in \hset,  \label{f2:5b}\\
& \; \err_{\di \hi} \geq 1 + (\hcoef_{\hi}^{\top} \phi(x_\di) + \hrhs_{\hi}) - \Delta_{\di}(1- \senser_{\hi \di}), &\quad& \forall \di \in \dset, \hi \in \hset,  \label{f2:6b}\\
& \|\hcoef_\hi\| \leq \kappa, &\quad&\forall \hi \in \hset,\\
& (z, v, t, u) \in \mathcal{X}.
\end{align}

For a fixed vector $(\bar z, \bar v, \bar t, \bar u) \in \mathcal{X}$, the problem is separable by hyperplanes. For each fixed hyperplane $\hi$, the Lagrangean function is:
\begin{align*}
\mathcal{L}(\hcoef,\hrhs,\err;\alpha,\beta,\gamma^+, \gamma^-, \rho) \;&=\;
\sum_{\di\in \dset} \err_\di \\
&\quad + \sum_{\di\in \dset} \alpha_\di\Big(1 - \Delta_\di(1-\bar \sensel_{\hi\di}) - \hcoef_r^\top \phi(x_\di) - \hrhs_\hi - \err_i\Big) \\
&\quad + \sum_{\di\in \dset} \beta_\di\Big(1 - \Delta_\di(1-\bar \senser_{\hi\di}) + \hcoef_r^\top \phi(x_\di) + \hrhs_\hi - \err_i\Big) \\
&\quad - \sum_{\di\in \dset} \gamma_i^+ \err_i \\
&\quad - \sum_{\di\in \dset} \gamma_i^- \err_i \\
&\quad + \rho \big(\|\hcoef_\hi\|_2^2 - \kappa^2\big).
\end{align*}
The KKT conditions read:
\begin{align*}
&\quad \sum_{\di\in \dset} (\alpha_\di - \beta_\di)\phi(x_\di) = 2 \rho \hcoef_\hi & \Rightarrow & \;\;\hcoef_\hi \in {\rm span}(\phi(x_1), \ldots, \phi(x_n)), \\
&\quad \sum_{\di\in \dset} (\alpha_\di - \beta_\di) = 0 & \Rightarrow & \sum_{\di\in \dset} \alpha_\di = \sum_{\di\in \dset}\beta_\di,  \\
&\quad \alpha_\di + \gamma^+_\di = 1, \;  \di\in \dset & \Rightarrow & 0 \leq \alpha_\di \leq 1, \\[1ex]
&\quad \beta_\di + \gamma^-_\di = 1,\; \di\in \dset & \Rightarrow & 0 \leq \beta_\di \leq 1.
\end{align*}
In particular,  for the optimal solution of the problem we get that $\hcoef_\hi = \sum_{j \in \dset} q_{j\hi} \phi(x_j)$ for some coefficients $q_\hi = (q_{1\hi}, \ldots, q_{n\hi}) \in \R^n$, for all $\hi \in \hset$. Then, defining $\lambda_{\hi\di} = \hcoef_\hi^\top \phi(x_\di)$:
$$
\lambda_{\hi\di} = \hcoef_\hi^\top \phi(x_\di) = \sum_{j \in \dset} q_{\di\hi} \phi(x_j)^t \phi(x_\di) = \sum_{j \in \dset} q_{\di\hi} \mathbb{K}_{ij} \Rightarrow \lambda_\hi = \mathbb{K} q_r.
$$
Thus, the existence of $\lambda_\hi$ is assured by the condition $\mathbb{K}\mathbb{K}^\dagger \lambda_r = \lambda_r$, for all $\hi \in \hset$, .

Furthermore, $\|a_r\|_2^2 = \displaystyle\min_{q_r: \mathbb{K} q_r = \lambda_\hi} q_r^{\top} \mathbb{K} q_r = \lambda_\hi^{\top} \mathbb{K}^\dagger \lambda_\hi
$

Summarizing, the transformed problem is equivalent to:
\begin{align*}
\min & \;\;\; \sum_{\hi \in \hset} \dsum_{\di\in \dset} \err_{\di\hi} \\
\mbox{s.t. }  
& \err_{\di\hi} \geq 1-\lambda_{\di\hi} - \hrhs_\hi - \Delta_\di (1-\bar\sensel_{\hi\di}), \forall \di\in \dset,\\
&\err_{\di\hi} \geq 1+\lambda_{\di\hi} + \hrhs_\hi - \Delta_\di (1-\bar\senser_{\hi\di}), \forall i\in \dset,\\
& \mathbb{K}\mathbb{K}^\dagger \lambda_\hi = \lambda_\hi, \forall \hi \in \hset,\\
& \sqrt{ \lambda_\hi^\top \mathbb{K}^{\dagger}\lambda_\hi } \leq \kappa, \forall \hi \in \hset,\\
& \lambda_\hi \in \R^n, \hrhs_\hi \in \R, \forall \hi \in \hset,\\
& \err_{\di\hi} \geq 0, \forall \di \in \dset, \hi \in \hset.
\end{align*}

The desired result is obtained by integrating the problem above with the decision made in the set $\mathcal{X}$, that allows also to decide the values of the variables $z$, $v$, $\senser$, and $\sensel$.
\end{proof}

Observe that in the special case that the gram matrix $\mathbb{K}$ is invertible, so that $\mathbb{K}^\dagger = \mathbb{K}^{-1}$, the constraint \eqref{eq:lambcond} becomes redundant, and the constraint \eqref{eq:matnorm} can be written as 
\[ \| \lambda_r \|_{\mathbb{K}^{-1}} \leq \kappa \]
where $\| \lambda \|_{A} = \sqrt{\lambda^\top A\lambda}$ is the $A$-norm defined by a positive definite matrix $A$.

Model \eqref{eq:kernel} allows to find the classifier for the training observations. In order to predict the class with the obtained kernelized classifier to out-of-sample, $x \in \R^d$, observation one has to proceed as detailed in the following result.

\begin{prop}
    Let $(\bar \lambda, \bar b, \bar z)$ a (partial) optimal solution of \eqref{eq:kernel}. Then, for each $x \in \R^d$, the predicted class of $x$ with this classifier is $\widehat{y}_x = \sum_{k \in \classes} k \bar z_{c^x}k$, where $c^x \in \cset$ is defined as:
    $$
    c_r^x = \sign\Big((\mathbb{K}^\dagger \bar \lambda_\hi)^{\top} \big(\K(x_1, x), \ldots, \K(x_n, x)\big)  + \bar\hrhs_\hi\Big), \quad \forall \hi \in \hset.
    $$
\end{prop}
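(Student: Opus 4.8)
The plan is to reduce the out-of-sample prediction rule to the same sign-pattern logic used for the linear (non-kernelized) model, and then to rewrite the resulting inner products in the $\phi$-projected space purely in terms of kernel evaluations, using the representer identity established inside the proof of Theorem~\ref{theo:kernel}.

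First I would recall the classification rule of Section~\ref{sec:1}: once the hyperplanes $(\hcoef_\hi,\hrhs_\hi)$ and the cell-to-class map $\cclass$ are fixed, a point $x$ is classified by computing its sign pattern $c^x \in \cset$ with respect to the $m$ hyperplanes and returning $\cclass(c^x)$. In the kernelized model the hyperplanes live in the $\phi$-projected space, so the relevant sign is $c^x_\hi = \sign\big(\hcoef_\hi^\top \phi(x) + \hrhs_\hi\big)$ for each $\hi \in \hset$. The remaining task is therefore to evaluate $\hcoef_\hi^\top \phi(x)$ without access to $\phi$ itself.

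Next I would invoke the representer identity derived in the proof of Theorem~\ref{theo:kernel}: at an optimal solution $\hcoef_\hi = \sum_{j \in \dset} q_{j\hi}\,\phi(x_j)$ for some $q_\hi \in \R^n$, and the canonical (minimum-norm) choice satisfying $\mathbb{K} q_\hi = \bar\lambda_\hi$ is $q_\hi = \mathbb{K}^\dagger \bar\lambda_\hi$. The range condition \eqref{eq:lambcond}, namely $\mathbb{K}\mathbb{K}^\dagger \bar\lambda_\hi = \bar\lambda_\hi$, guarantees that this choice reproduces the training inner products $\bar\lambda_{\hi\di} = \hcoef_\hi^\top \phi(x_\di)$ exactly, while the norm identity $\|\hcoef_\hi\|_2^2 = \bar\lambda_\hi^\top \mathbb{K}^\dagger \bar\lambda_\hi$ certifies consistency with \eqref{eq:matnorm}. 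Substituting this expansion gives
\[
\hcoef_\hi^\top \phi(x) = \sum_{j \in \dset} (\mathbb{K}^\dagger \bar\lambda_\hi)_j\, \K(x_j,x) = (\mathbb{K}^\dagger \bar\lambda_\hi)^\top \big(\K(x_1,x),\ldots,\K(x_n,x)\big),
\]
so that $c^x_\hi = \sign\big((\mathbb{K}^\dagger \bar\lambda_\hi)^\top (\K(x_1,x),\ldots,\K(x_n,x)) + \bar\hrhs_\hi\big)$, which is exactly the claimed formula. Finally, since constraint \eqref{fk:2} forces $\bar\cclass_{c^x k}=1$ for exactly one class $k$ and zero for all others, the expression $\sum_{k \in \classes} k\,\bar\cclass_{c^x k}$ selects that unique class, yielding $\widehat y_x = \cclass(c^x)$.

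The main obstacle I anticipate is the non-uniqueness of $\hcoef_\hi$ in the high-dimensional space: the optimization constrains only the training inner products $\bar\lambda_\hi$ and the norm, not $\hcoef_\hi$ itself, so any component of $\hcoef_\hi$ orthogonal to $\mathrm{span}\{\phi(x_j): j \in \dset\}$ would leave the training data untouched while altering $\hcoef_\hi^\top \phi(x)$ for a generic $x$. The resolution is to commit explicitly to the minimum-norm representer $\hcoef_\hi = \sum_{j \in \dset} (\mathbb{K}^\dagger \bar\lambda_\hi)_j\, \phi(x_j)$, which is simultaneously the choice consistent with the norm value attained in \eqref{eq:matnorm} and the one that renders the out-of-sample rule computable from kernel evaluations alone; making this canonical selection explicit is what ensures the prediction formula is well defined.
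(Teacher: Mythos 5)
Your proof is correct and follows essentially the same route as the paper's: evaluate each transformed hyperplane at $\phi(x)$ via the representer expansion $\hcoef_\hi = \sum_{j\in\dset} (\mathbb{K}^\dagger \bar\lambda_\hi)_j\, \phi(x_j)$ established in the proof of Theorem~\ref{theo:kernel}, rewrite the inner products as kernel evaluations, and apply the assignment $\bar z$ to the resulting sign pattern. Your explicit commitment to the minimum-norm representer to resolve the non-uniqueness of $\hcoef_\hi$ (and hence well-definedness of the out-of-sample rule) is a careful refinement of a point the paper leaves implicit, but it is the same argument.
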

\begin{proof}
    Note that, the predicted class of $x$ is obtained first by detecting the cell (with respect to the obtained \emph{hyperplanes}) where it is positioned, and the applying the assignment $\bar z$ to that cell. The evaluation of $x$ in the (transformed hyperplane) would be (using the rewriting detailed in the proof of Theorem \ref{theo:kernel})
    $$
    \bar \hcoef^{\top}_\hi \phi(x) + \bar \hrhs_\hi = q_\hi^{\top} \mathcal{K}_x + \hrhs_\hi =(\mathbb{K}^\dagger \bar \lambda_\hi)^{\top} \mathcal{K}_x + \bar \hrhs_\hi, \forall \hi \in \hset,
    $$
    where we denote by $\mathcal{K}_x = \big(\K(x_1, x), \ldots, \K(x_n, x)\big)$.

    Then, the evaluation of the branch (left or right) with respect to each of the transformed hyperplanes can be obtained using the values of $\bar \lambda$ and $\bar \hrhs$. The result follows just by applying the assignment $\bar z$ to the obtained sign-pattern.
\end{proof}

In the following example we illustrate the usage of the kernelized methodology.

\begin{example}\label{ex:2}
    We consider the instance of Example \ref{ex:1} but apply the non linear mapping to the data $\phi(x_1, x_2) =  (x_1,\, x_2 +  \sin(0.7 x_1))$. Applying our methodology with a rbf kernel ($\mathcal{K}(x,y)= \exp(-\sigma \|x-y\|^2)$) with $\kappa=10$, $\sigma=1$, and two separating surfaces ($m=2$) we obtained the classifier whose boundaries are plot in  Figure \ref{ex:phi}.
    \begin{figure}[h]
    \begin{center}
\includegraphics[width=0.6\textwidth]{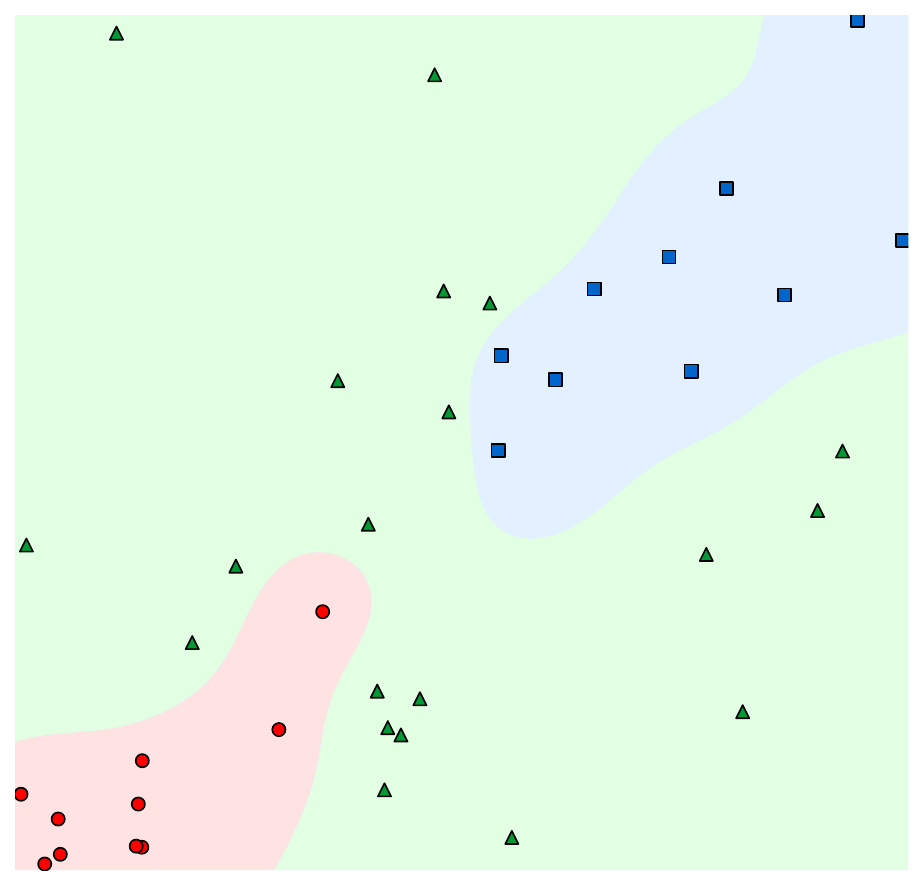}
\end{center}
    \caption{Solution obtained with the kernelized model for Example \ref{ex:2}}
    \label{ex:phi}
    \end{figure}
\end{example}

\exclude{
\begin{enumerate}
    \item[ST1] Splits the classes in two (random) sets $C_1, C_2 \subset \{1, \ldots, k\}$. Set $N_1$ and $N_2$ the observations with classes at each of the class sets, and randomly select subsamples of both sets (size $\sim n/4$). Compute the SVM classifier and predict the classes of each of the observations. For those predicted groups ($T$ and $U$) add cut induced by $\Phi(T,U)$.
    \item[ST2] Select a random subsample of $N$, and separate it by classes: $N_1, \ldots, N_k$. For each pair of classes  add the cut induced by $\Phi(N_{k_1}, N_{k_2})$.
    \item[ST3] For each class $k$, let $N_k$ the observations in that class. Solve $\Phi(N_k, N')$ with $N'$ the observations in a different class that $k$, but close to the observations in $N_k$ (${\rm d}(N_k, x_i) <0.5$ for all $i\in \dset'$ (normalized data). Add the induced cut. Add also cuts induced by small subsamples of $N_k$ and $N'$.
    \item[ST4] Add cuts induced by partitions $T$ and $U$ with $T$ being a singleton and $U$ the set of elements in a different class than the unique element in $T$.
    \item[ST5] Select subsamples of each of the classes of size $d$ (dimension of the space): $X$. Compute generators of the null space of $[X \;\; \mathbf{1}]$, $(a,b) \in \R^{d+1}$ (hyperplane passing through the elements in $X$). Add the cuts induced by the separation of that hyperplane.
    \end{enumerate}
}

\section{Computational Experiments}\label{sec:5}

In this section we provide the results of our computational experiments to to investigate the computational efficiency and predictive accuracy of our proposed model.

All computational experiments were performed on a machine equipped with an Intel(R) Xeon(R) Gold 6258R CPU running at 2.70GHz, supporting the SSE2, AVX, AVX2, and AVX512 instruction sets. The system has 56 physical cores (56 logical processors), and up to 32 threads were utilized during execution. Optimization tasks were solved using Gurobi Optimizer version 10.0.3. with a work limit of 3600 seconds.

\subsection{Synthetic Datasets}

We generate a battery of datasets constructed to simulate a multiclass classification scenario in which each class is represented by multiple, spatially distinct clusters. First, a set of $n \in  \{10, 20, 30, 40, 50, 100, 200, 500, 1000\}$ points is generated using the \texttt{make\_blobs} function of sklearn, creating $n \times b$ clusters in a two-dimensional space with a specified spread, for $b \in \{2,3\}$. These clusters are then randomly reassigned to $k \in \{2,3\}$ classes such that each class corresponds to several disjoint clusters. To increase dimensionality, additional noise features are added: each is drawn independently from a standard normal distribution, so the final dataset has $d \in \{2,5,10\}$ dimensions, with only the first two being informative. Finally, the entire dataset is scaled to the $[0,1]$ range using min-max normalization. This results in a high-dimensional dataset where each class forms several well-separated clusters in feature space, making it suitable for evaluating clustering and classification algorithms under realistic complexity. For each combination of parameters, $(n,b,k,d)$ we generate $5$ random instances. In Figure \ref{fig:example} we show one of our generated instances for parameters $(1000, 3, 3, 10)$, but projected onto the informative dimensions.

\begin{figure}[h!]
\centering\input{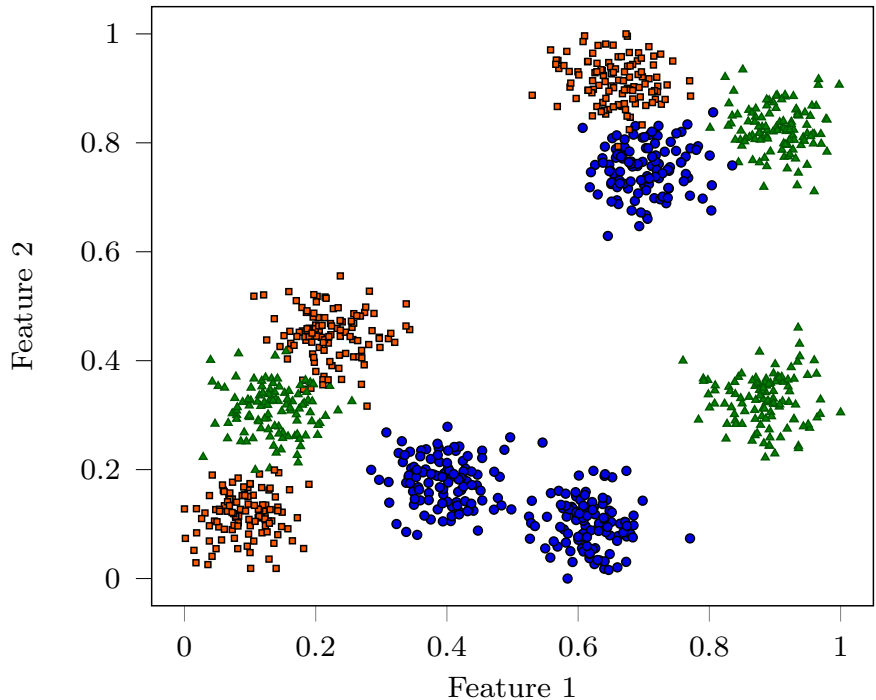}
\caption{Illustration of one of our 10-dimensional instances (projected onto the two first dimensions) with 3 classes with three clusters for each of them for a sample to size $1000$.\label{fig:example}}
\end{figure}

\subsection{Computational Performance}

We first investigate the computational performance of our proposed MIP model. We investigate the different symmetry breaking strategies and then compare the performance of our proposed MIP model to the prior MIP model of \cite{BlancoJaponPuerto_ADAC2020}.

Since the MIP models might be very time demanding, in these first experiments, we only run the models for instances with sample sizes $n \leq 100$. For each of these 356 instances, we run our models to construct $m \in \{1,2,3,4\}$ hyperplanes with $k \leq 2^m$ (to assure feasibility) instances, and use margin value $\kappa=2m$.

\subsubsection*{Symmetry Breaking Strategies}

As discussed in Section \ref{sec:2}, the MIP formulation that we propose exhibits symmetry of different sources that can affect the performance of the solution approaches implemented in the solvers. We explore different strategies to reduce those symmetries, and report results comparing the performance of them.

We denote by \texttt{sym1}, Gurobi's internal symmetry-handling capabilities in an aggressive mode (\texttt{model.Params.Symmetry} = 2). The second method, \texttt{sym2}, enforces an ordering on the bias terms by adding constraints $b_r \geq b_{r+1}$ for all $r < m$ and requires nonnegativity $b_r \geq 0$ for all $r$. The third strategy, \texttt{sym3}, fixes the assignment of the first point to the first cell by imposing $v_{11} = 1$.

In Figure \ref{fig:sym} we plot the performance profile for the different approaches. This plot displays the number of instances solved within the CPU time displayed on the $x$-axis. 
For those instances that are solved in less CPU time, it seems that strategy \texttt{sym3} is considerable better than the others. However, for the most challenging instance, all the methods seems to have a similar performance. 

\begin{figure}[h!]
\centering\includegraphics[width=0.7\textwidth]{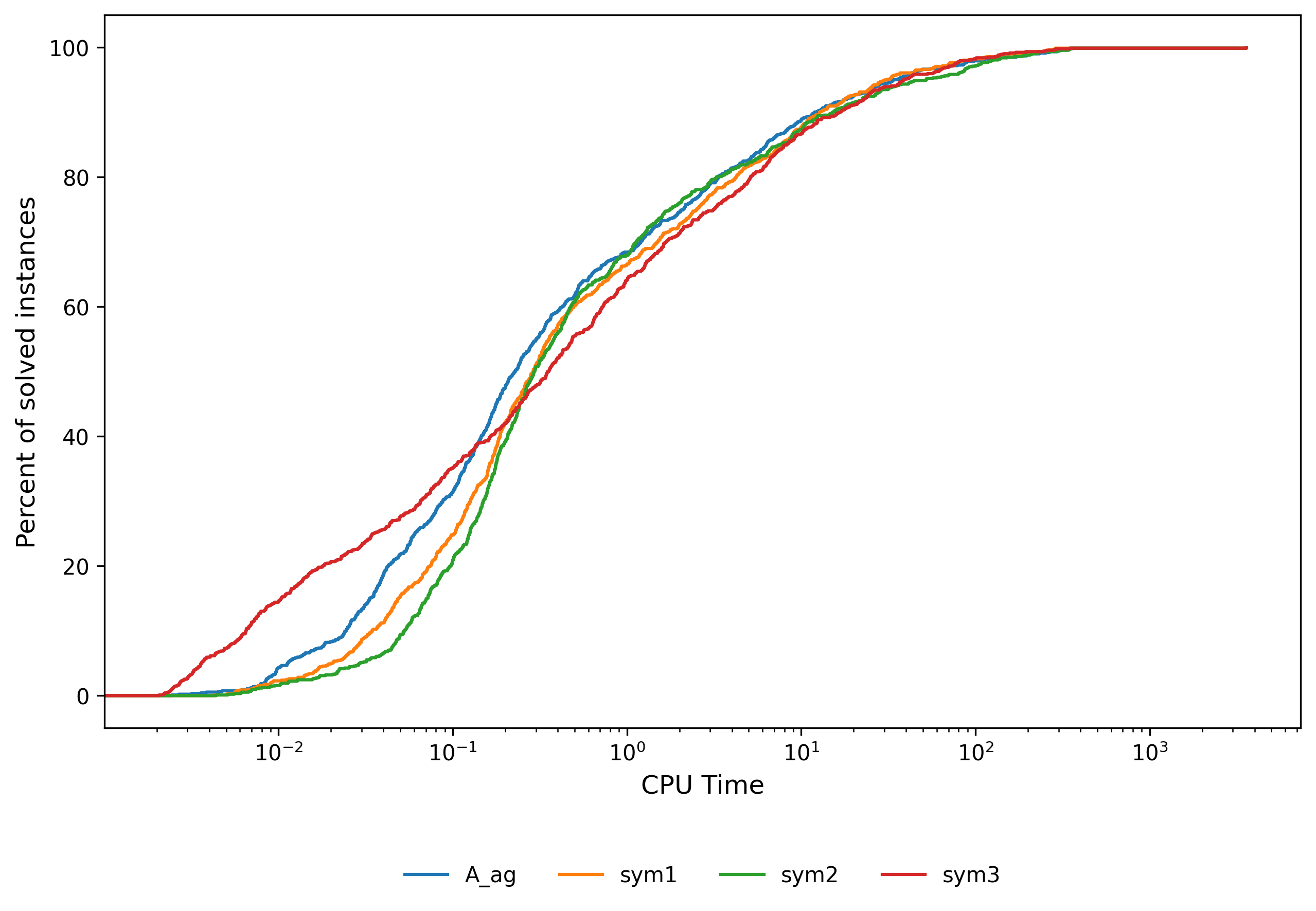}
\caption{Performance Profile by size of the datasets by the breaking symmetry strategy used. CPU time is in log-scale.\label{fig:sym}}
\end{figure}



\subsubsection*{Comparison with Prior MIP Formulation}

We next compare the computational performance of our model with the model proposed in \cite{BlancoJaponPuerto_ADAC2020}, that we denote by \texttt{BJP20}, which also consists on solving a MIP problem to adequately locate a number of hyperplanes for multiclass SVM-based classification (see Section \ref{sec:2}). In Figure \ref{fig:bjp} we plot the performance profile comparing our aggregated model with the symmetry breaking strategy \texttt{sym3} with \texttt{BJP20}. There, one can clearly observe, that our proposed formulation demonstrates substantially superior computational efficiency compared to \texttt{BJP20}, achieving markedly lower CPU times across all tested instances. For example, nearly 90\% of these instances are solved by our formulation within 10 seconds, whereas the model from \texttt{BJP20} fails to solve any of the instances in that time.

\begin{figure}[h!]
\centering\includegraphics[width=0.7\textwidth]{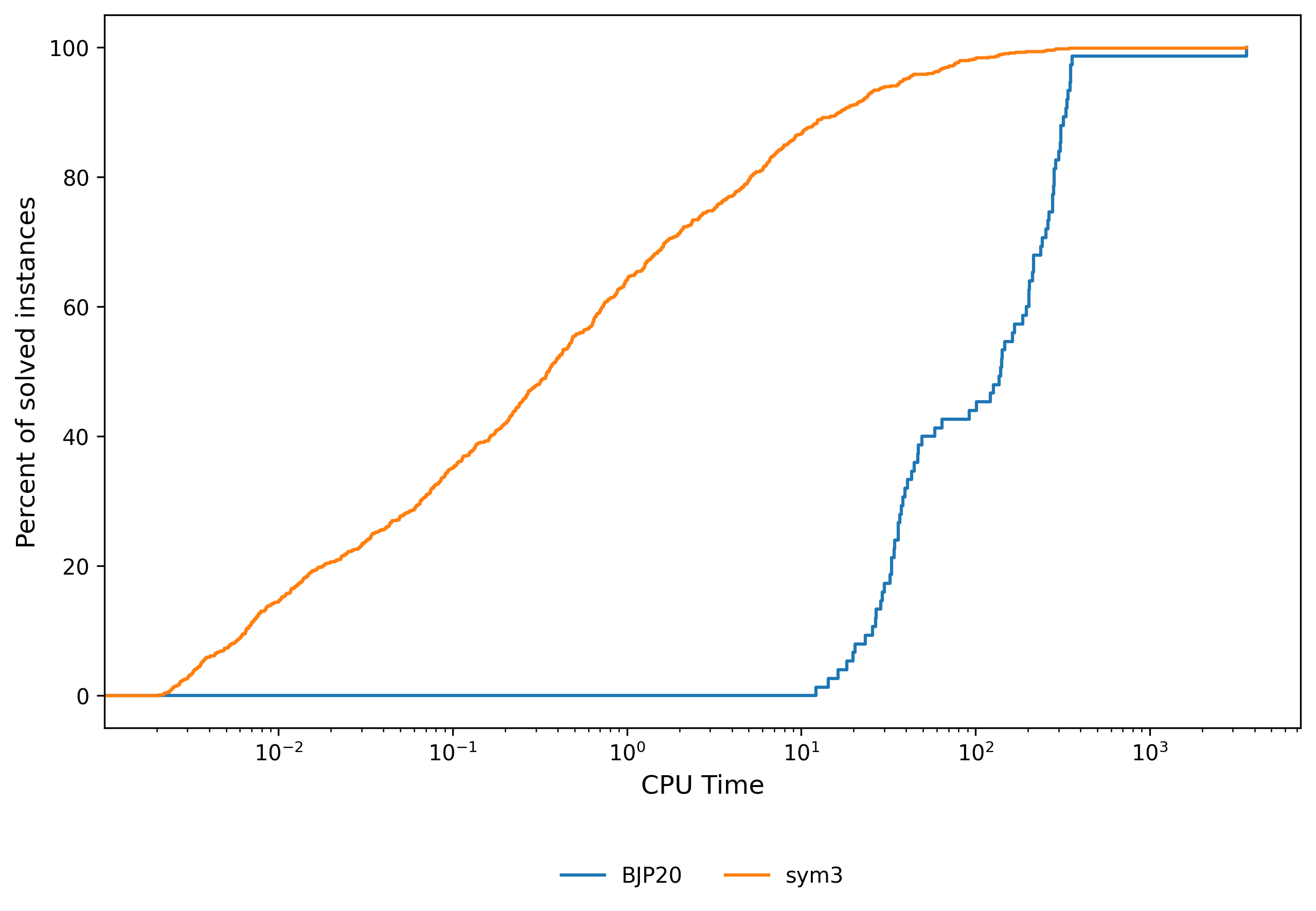}
\caption{Performance profile displaying number of instances solved within the CPU time displayed on the $x$-axis for our formulation with symmetry strategy \texttt{sym3} and \texttt{BJP20}. CPU time is in log-scale.\label{fig:bjp}}
\end{figure}

We present in Table \ref{tab:comparisons} more detailed view of these computational results. There, for the two methods we report the average CPU times (in seconds) required to solve the instances among those that were optimally solved within the time limit (here \texttt{TL} indicates that none of the instances were optimally solved), the average MIP Gaps (in percent) obtained for those instances that were not optimally solved in that limit, and the percent of instances that were not optimally solved. These results are summarized by the number of data points $n$ and the number of hyperplanes $m$. 

The results clearly show the substantial computational improvement achieved by the our approach. For all tested instance sizes, our model is able to solve nearly all cases to optimality within a few seconds, whereas the approach proposed in \cite{BlancoJaponPuerto_ADAC2020} rapidly becomes intractable even for moderate sample sizes. The average CPU times of our method are several orders of magnitude smaller, and the percentage of unsolved instances is negligible compared with the baseline. In addition, the MIP gaps reported for our formulation remain small when the time limit is reached, confirming its numerical stability and tighter continuous relaxation. These results justify the reformulation effort and demonstrate that the proposed model significantly enhances tractability without compromising solution quality or modeling flexibility.

\begin{table}[h]
\centering
\begin{tabular}{l l r r r r r r}
 &  & \multicolumn{2}{c}{CPU Time} & \multicolumn{2}{c}{\% MIP Gap} & \multicolumn{2}{c}{\% UnSolved} \\
$n$ & $m$ & Ours & \cite{BlancoJaponPuerto_ADAC2020} & Ours & \cite{BlancoJaponPuerto_ADAC2020} & Ours & \cite{BlancoJaponPuerto_ADAC2020} \\
\hline
10 & 1 & 0.1 & 31.8  & - & - & 0.0\% & 0.0\% \\
   & 2 & 0.2 & 244.1 & - & 13.5\% & 0.0\% & 60.0\% \\
   & 3 & 2.1 & 324.8 & - & 45.7\% & 0.0\% & 91.7\% \\
   & 4 & 4.3 & TL       & 50.0\% & 84.0\% & 3.3\% & 100.0\% \\ \hline
20 & 1 & 0.1 & 145.3 & - & 39.9\% & 0.0\% & 73.3\% \\
 & 2 & 0.8 & 277.7 & - & 96.0\% & 0.0\% & 98.3\% \\
 & 3 & 14.6 &TL & - & 100.0\% & 8.3\% & 100.0\% \\
 & 4 & 1.4 &TL & 99.0\% & 100.0\% & 16.7\% & 100.0\% \\\hline
30 & 1 & 0.1 & 216.4 & - & 56.8\% & 0.0\% & 83.3\% \\
 & 2 & 2.3 &TL & - & 100.0\% & 3.3\% & 100.0\% \\
 & 3 & 12.6 &TL & 49.3\% & 100.0\% & 26.7\% & 100.0\% \\
 & 4 & 4.8 &TL & 93.9\% & 100.0\% & 25.0\% & 100.0\% \\\hline
40 & 1 & 0.1 & 147.4 & - & 81.2\% & 0.0\% & 96.7\% \\
 & 2 & 4.6 &TL & 55.9\% & 100.0\% & 15.0\% & 100.0\% \\
 & 3 & 24.2 &TL & 69.5\% & 100.0\% & 45.0\% & 100.0\% \\
 & 4 & 17.2 &TL & 92.1\% & 100.0\% & 35.0\% & 100.0\% \\\hline
50 & 1 & 0.1 &TL & - & 98.8\% & 0.0\% & 100.0\% \\
 & 2 & 13.8 &TL & 73.1\% & 100.0\% & 15.0\% & 100.0\% \\
 & 3 & 24.7 &TL & 78.6\% & 100.0\% & 55.0\% & 100.0\% \\
 & 4 & 25.9 &TL & 90.6\% & 100.0\% & 56.7\% & 100.0\% \\\hline
100 & 1 & 0.1 &TL &- & 100.0\% & 0.0\% & 100.0\% \\
 & 2 & 49.0 &TL & 60.4\% & 100.0\% & 50.0\% & 100.0\% \\
 & 3 & 2.6 &TL & 97.5\% & 100.0\% & 83.9\% & 100.0\% \\
 & 4 & 5.6 &TL & 98.9\% & 100.0\% & 83.9\% & 100.0\% \\\hline
\end{tabular}
\caption{Average CPU times (in seconds) and MIP Gaps obtained in our computational experiments when comparing our method with the one proposed in \cite{BlancoJaponPuerto_ADAC2020}.\label{tab:comparisons}}
\end{table}

\subsection{Prediction Accuracy}

We next compare the performance, in terms of training accuracy, of our method with respect to three well known SVM-based methodologies: One Versus One (ovo) strategy, which constructs $k(k - 1)$ hyperplanes and was tested in both its linear (\texttt{sk\_OVOlin}) and rbf kernel (\texttt{sk\_OVOrbf})) kernel variants; the One Versus Rest (ovr) strategy, which uses $k$ hyperplanes, also in both linear (\texttt{sk\_OVRlin}) and rbf kernel (\texttt{sk\_OVRrbf}) versions; and the Crammer-Singer (\texttt{cs}) method, as proposed in \cite{cs}. All these methods are implemented in module \texttt{sklearn} in python. We use the default parameters for all of them. We conduct this test only for the instances with sample size $n \geq 100$.

In terms of computational performance, our MIP-based approach is computationally more costly than the methods in \texttt{sklearn}, which generally take less than one second on these data sets. However, we explore here the ability of our methodology to capture class geometries that other methodologies are not able to detect.

In Figure \ref{fig:bp} we show the box plots for the  training accuracies (percent of well classified instances) with all these methods. As can be observed, \texttt{BJP20} achieves high accuracies in some datasets, but has poor accuracy in many others. This is due to the computational challenge in solving this model, since $94\%$ of the instances were not optimally solved within the time limit. OVO and OVR  (linear) seems to fail for these instances, obtaining small average accuracies close to $75\%$ compared to our methodology ($94\%$). The only method that has comparable training accuracy as ours is OVR with rbf kernel ($93\%$). However, the model obtained from our methodology uses linear separating hyperplanes, and hence may be considered as more interpretable and accessible since it does not require the use of kernels to obtain accurate boundaries.

It is worth emphasizing that the ability to solve the proposed formulation to optimality plays a key role in the quality and reliability of the resulting classifiers. Since our formulation (and the strengthening strategies that we apply) reduces the number of binary variables and symmetry in the model (with respect to previous discrete optimization-based approaches), it allows optimal solutions to be obtained for significantly larger instances within much shorter CPU times. This, in turn, ensures that the resulting separating hyperplanes fully exploit the margin-based structure of the model, providing more accurate and stable classifiers. In contrast, the previous formulation \cite{BlancoJaponPuerto_ADAC2020} often remains far from optimality due to its high combinatorial complexity, which explains its lower accuracy in many instances. The improved tractability of our model therefore translates directly into better predictive performance, while substantially reducing the total computational effort required during training.

\begin{figure}[h!]
\centering\includegraphics[width=0.8\textwidth]{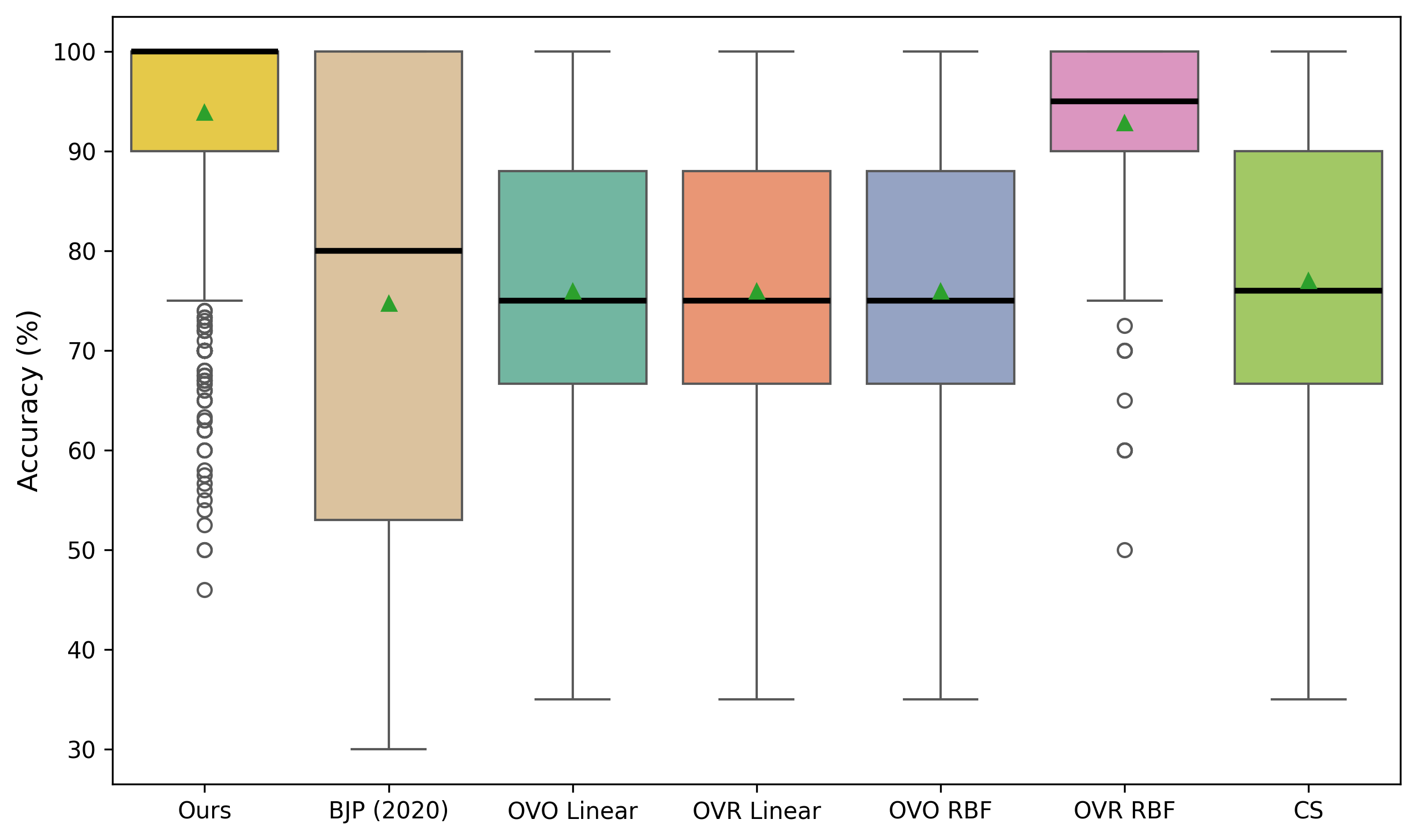}
\caption{Boxplot with accuracies for the different methods with default parameters. For each method, the thick black horizontal line represents the median and the triangle represents the average. \label{fig:bp}}
\end{figure}

Although our experimental analysis primarily focuses on comparisons with the most widely used SVM-based multiclass schemes, the proposed framework could also encompass and be contrasted with other optimization-based classifiers, such as optimal classification trees or mixed-integer models with embedded feature selection \cite{bertsimas2017optimal, aghaei2020learning, labbe2019mixed}. For the sake of simplicity and concreteness, these extensions are not explored here, as their analysis would require additional methodological developments beyond the scope of this work. The aim of this paper is to establish a general and unified optimization-based framework for multiclass classification, providing the theoretical and computational foundations upon which such comparative studies can later be built.

\subsection{Scaling Up Via Dynamic Clustering}
Although our proposed MIP formulation is significantly more tractable than the prior formulation \texttt{BJP20}, exactly solving the MIP formulation to optimality may still be intractable for instances with a large number of data points. Thus, following the motivation of the classic ``chunking'' methods for training SVM models \cite{vapnik1982estimation}, or the recently proposed dynamic clustering method used for MIP approaches to semi-supervised SVMs  \cite{burgard2024mixed}, we propose a dynamic clustering strategy that repeatedly solves the proposed MIP formulation on a reduced size-data set obtained by creating representative data points.

The approach, described in Algorithm \ref{alg:dyncluster} is designed to refine an initial partition of a dataset $\mathcal{X}$ into clusters using a model-driven splitting strategy. The procedure begins by initializing a set of clusters $\mathcal{C}$ such that the number of clusters within each class is $n_{\text{clusters}}$, using Kmeans applied to the feature vectors sharing each class. Thus, we construct the clustered dataset,  $\mathcal{X}_{\mathcal{C}}$, with the centroids of these clusters and their associated label. 

At each iteration, the model is trained using the current centroids $\mathcal{X}_{\mathcal{C}}$, yielding a candidate solution for the arrangement of hyperplanes. Then, the total classification error over the original dataset $\mathcal{X}$ is evaluated, denoted $e_{\mathcal{X}}$. If no significant improvement is observed in the error value from the most recent iteration to the current iteration, the procedure terminates.
Otherwise, the current clustering $\mathcal{C}$ is dynamically updated by splitting clusters whose members are separated by the decision boundaries of the classifier.  This results in a new cluster assignment $\mathcal{C}'$ and its corresponding set of centroids $\mathcal{X}_{\mathcal{C}'}$.
If the new centroids do not differ from the previous ones, then the algorithm stops. Otherwise, the updated clusters and centroids are adopted and the process continues. In addition to the two mentioned stopping rules (lack of improvement in errors or lack of change in clusters), we also impose a time limit.

This algorithm balances model learning with clustering structure discovery and is particularly effective for our multiclass classification framework where cluster geometry is captured via hyperplane separability. The approach returns a heuristic solution based on the final clustering configuration.

\begin{algorithm}[h!]
\caption{Dynamic clustering approach}\label{alg:dyncluster}
\KwIn{$\X$, $n_{\text{clusters}}$}
\KwOut{Heuristic Solution}

Initialize clusters $\mathcal{C}$ ($|\mathcal{C}|=n_{\text{clusters}}$) and labels, and compute the centroid dataset $\X_{\mathcal{C}}$.

\While{time elapsed $\leq$ time limit}{
    Train model using $\X_{\mathcal{C}}$
    $e_\X \gets$: error over all data\;

    \If{No improvement}{
            \textbf{break}\;
        }

    Update clusters: Split clusters if separated by hyperplane and merge small clusters: $\mathcal{C}'$ and $\X_\mathcal{C'}$.

    \eIf{$X_{\mathcal{C}'} = \X_{\mathcal{C}}$}{
        \textbf{break} 
    }{
    $X_{\mathcal{C}} \gets X_{\mathcal{C}'}$.
    }
    }

\Return Solution\;
\end{algorithm}

We next investigate the computational performance and quality of the solutions obtained using the dynamic clustering approach. To this end, we run, for one hour of time limit our exact model, and the clustering approach for $m \{2,3\}$ hyperplanes, $\kappa = 3m$, and a maximum number of $100$ iterations for the clustering heuristic.

In Figure \ref{fig:time_clusters} we show the CPU time required by both approaches (the so-called \texttt{plain} and \texttt{cluster}) to terminate (unless the time limit or maximum number of iterations is reached) by number of observations. We observe that, as expected, the CPU times required to solve the instances with the dynamic clustering approach are much smaller than those of the exact approach, especially as the number of data points grows.

\begin{figure}[h]
\centering
\includegraphics[width=0.8\textwidth]{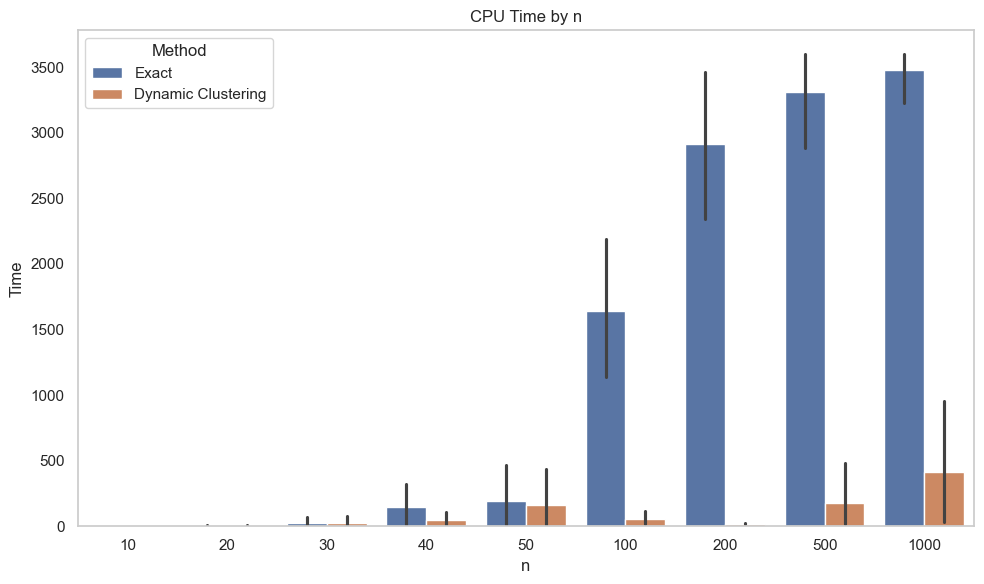}
\caption{CPU times of both the exact and the dynamic clustering. \label{fig:time_clusters}}
\end{figure}

We next investigate the quality of the solutions obtained by the dynamic clustering approach. In Figure \ref{fig:devs} we show two quality measures for the solutions obtained with the clustering approach. On the one hand, we show a box plot indicating the percent deviation in the evaluation of the actual objective function of the solutions obtained with the clustering method with respect to the exact approach (left plot). Second, we measure the training accuracy of the  solutions obtained with both methods and present a box plot of this deviation.  From both plots, we can conclude that, except for a few instances, the clustering algorithm exhibits excellent performance in terms of quality. Its deviation in objective value and accuracy is below $5\%$, and in many cases it even produces solutions superior to those the exact approach obtains within the given time limit.

\begin{figure}[h]
\centering
\includegraphics[width=0.45\textwidth]{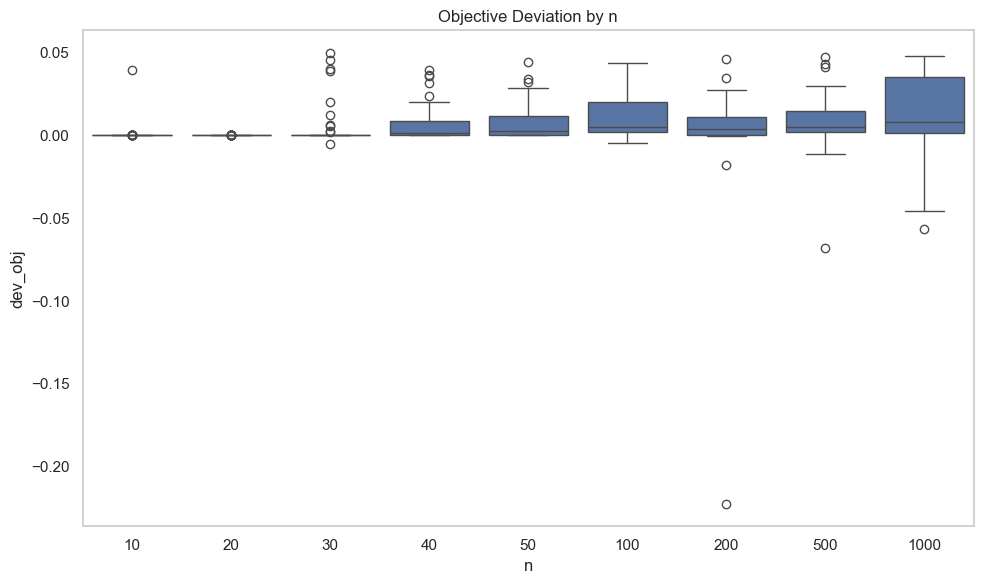}~\includegraphics[width=0.45\textwidth]{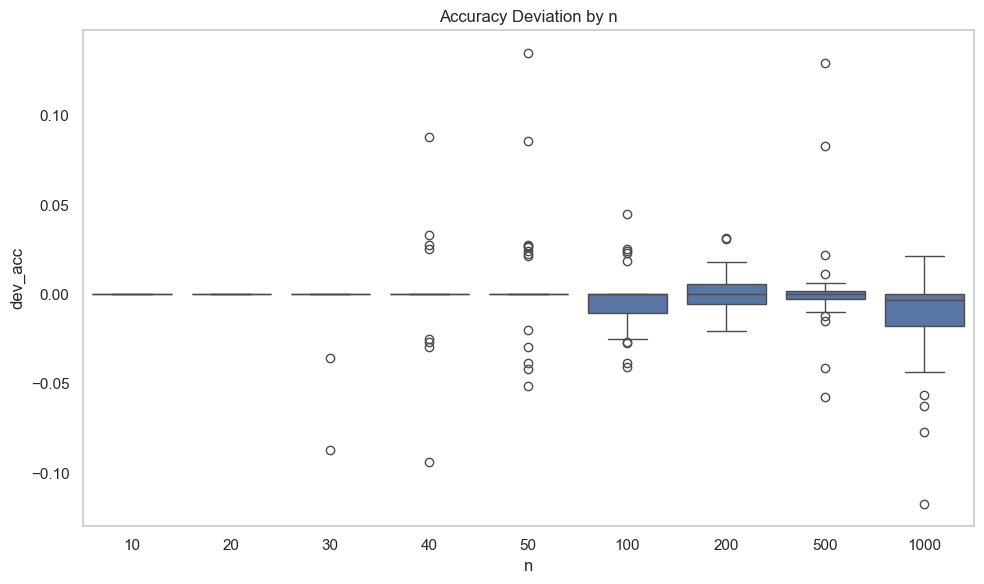}
\caption{Deviations of the solutions obtained with the dynamic clustering with respect the exact approach (left: deviation in the objective function; right: deviation in training accuracy). Negative deviation indicates the dynamic clustering method produced a better value than the solution obtained from solving the original formulation with a one hour time limit. \label{fig:devs}}
\end{figure}

While the proposed dynamic clustering matheuristic significantly improves scalability relative to the exact MIP, very large-scale datasets (e.g., with tens of thousands of observations or hundreds of features) may still pose computational challenges. In such contexts, hybrid strategies that combine our formulation with decomposition, incremental training, or parallelization techniques could provide additional efficiency gains and will be the subject of future work.

\subsection{Performance on Real Datasets}

In our final experiment, we test the scalability of using our MIP formulation within the dynamic clustering approach by testing it on real datasets from the UCI repository.  We run our method and those available in sklearn (OVR, OVO, and Cramer-Singer) with different kernels (linear and rbf). We tune the parameters running the different parameters on a dense grid $C$, $\sigma \in [0.01, 100]$ (30 values), for OVR, OVO and CS, and $\kappa \in [0.1, 50]$ (20 values) and $m \in \{m_0, m_0+1, m_0+2\}$ for our approach, where $m_0$ is the minimum number of hyperperplanes required to have at least the same number of cells than classes in the dataset).

In Table \ref{tab:uci} we show the datasets used in our experiments, as well as the  best  test accuracy results results obtained with the four approaches for each dataset.

\begin{table}[t]
\centering
\setlength{\tabcolsep}{4.5pt}      
\renewcommand{\arraystretch}{0.95}  
\resizebox{\textwidth}{!}{%
\begin{tabular}{l r r r c c c c c c c}
\toprule
\textbf{dataset} & $n$ & $d$ & $k$ & CS & OVO (L) & OVO (K) & OVR (L) & OVR (K) & Ours (L) & Ours (K) \\
\midrule
\texttt{appendicitis\_test} & 106 & 7 & 2 & \textbf{0.9245} & \textbf{0.9245} & \textbf{0.9245} & \textbf{0.9245} & \textbf{0.9245} & \textbf{0.9245} &  \textbf{0.9245}\\
\texttt{heart-statlog-uci} & 270 & 13 & 2 & 0.8593 & 0.8667 & 0.8370 & 0.8667 & 0.8519 & 0.8148 & \textbf{0.8767} \\
\texttt{iris} & 150 & 4 & 3 & 0.9600 & 0.9733 & 0.9733 & 0.9333 & 0.9600 & 0.9733 & \textbf{0.9866} \\
\texttt{letter} & 3864 & 16 & 5 & 0.9043 & 0.9200 & 0.9352 & 0.8973 & 0.9352 & 0.9352 &  \textbf{0.9357}\\
\texttt{maternal\_health\_risk} & 1014 & 6 & 3 & 0.6218 & 0.6036 & 0.6681 & 0.5812 & 0.6737 & 0.6232 & \textbf{0.6918} \\
\texttt{optdigits} & 2822 & 64 & 5 & 0.9853 & 0.9881 & 0.9786 & 0.9869 & \textbf{0.9941} & 0.9694 &  0.9881\\
\texttt{page-blocks} & 5473 & 10 & 5 & 0.9401 & 0.9329 & 0.9424 & 0.9310 & 0.9521 & 0.9408 & \textbf{0.9526} \\
\texttt{phoneme} & 5404 & 5 & 2 & 0.7755 & 0.7755 & 0.7835 & 0.7755 & \textbf{0.8266} & 0.7835 & \textbf{0.8266} \\
\texttt{satimage} & 4922 & 36 & 5 & 0.8944 & 0.9026 & 0.9128 & 0.8884 & 0.9102 & 0.8994 & \textbf{0.9149} \\
\texttt{Thoracic\_Surgery} & 470 & 3 & 2 & \textbf{0.8596} & \textbf{0.8596} & \textbf{0.8596} & \textbf{0.8596} & \textbf{0.8596} & \textbf{0.8596} & \textbf{0.8596} \\
\texttt{vehicle} & 846 & 18 & 4 & 0.7729 & 0.7912 & 0.8205 & 0.7766 & 0.8223 & 0.7099 & \textbf{0.8351} \\
\texttt{wine} & 178 & 13 & 3 & \textbf{1.0000} & 0.9775 & 0.9775 & \textbf{1.0000} & 0.9775 & 0.9887 & \textbf{1.0000} \\
\texttt{yeast} & 598 & 8 & 5 & 0.8863 & 0.8896 & 0.8896 & 0.8729 & 0.8963 & 0.9096 & \textbf{0.9163} \\
\bottomrule
\end{tabular}%
}
\caption{Accuracy results for the UCI datasets. \label{tab:uci}}
\end{table}

Across the collection of multiclass datasets, our proposed methods, based respectively on constructing arrangements of \emph{linear} hyperplanes (Ours (L)) and their nonlinear kernel extension (Ours (K)), exhibit competitive and often superior accuracy compared to classical decomposition approaches such as one-vs-one (OVO) and one-vs-rest (OVR), both in their linear and kernelized variants. In several datasets (e.g., heart-statlog-uci, iris, page-blocks, phoneme, satimage, student\_lifestyle\_dataset, vehicle, wine, and yeast), at least one of our methods achieves the best overall performance, with the kernelized version frequently providing an additional improvement when nonlinear decision boundaries are relevant. Notably, the linear version already matches or surpasses standard methods on high-dimensional or moderately sized problems such as letter, optidigits, and maternal\_health\_risk, suggesting that the cell-based hyperplane arrangement effectively captures class separability even without kernel transformations. The kernel-based variant further refines this separation, attaining near-perfect accuracy on benchmark datasets such as iris and wine. Overall, these results confirm that our framework offers a flexible and robust alternative to traditional multiclass strategies, particularly effective in capturing both linear and nonlinear class structures.

\section{Conclusion}

We have proposed a new MIP formulation for finding a structured hyperplane arrangement for multiclass classification. The formulation naturally extends the SVM margin modeling approach to a setting with multiple hyperplanes and classes. The proposed formulation is computationally more efficient than a previous formulation for the same problem. We also discussed how the formulation can be adapted to model a tree-based classification structure and to handle nonlinear separating regions via the ``kernel trick'' from the SVM literature. In our computational study we demonstrated how a natural dynamic cluster-based heuristic can be used in conjunction with the MIP formulation to find high-quality solutions for problems with large data sets. As a MIP-based approach, the approach remains computationally more demanding than existing heuristic extensions of SVM to the multiclass setting. However, depending on the structure of the data, the approach has potential to yield better classification performance, and thus for such data sets may be worth the extra computational time.

The predictive behavior of our model depends mainly on the number of separating hyperplanes~$m$ and the margin-controlling parameter~$\kappa$. Although we explored a wide grid of these values in our experiments, a more systematic study of their influence on accuracy and computational effort would provide practical guidelines for parameter tuning. Preliminary results suggest that moderate values of~$\kappa$ and small~$m$ already yield competitive performance, indicating that the model is relatively robust to moderate parameter variations.

Beyond predictive accuracy, an appealing feature of our approach is its interpretability: each separating hyperplane can be directly interpreted as a linear decision boundary in the original feature space, and the induced polyhedral cells provide transparent decision regions. Future work will explore visualization tools and post-hoc analysis techniques to further enhance the interpretability and generalization insights derived from the obtained arrangements.

Future research will also  focus on developing decomposition-based strategies to solve exactly the proposed optimization models more efficiently. In particular, Benders-like  decompositions could exploit the problem’s structure across hyperplanes reducing computational demand while preserving optimality guarantees. In addition, an important research direction will be the incorporation of robustness features into our framework, allowing the model to handle uncertainty or noise in both the feature values and the class labels. By integrating tools from robust optimization, the resulting formulations would enhance the reliability and stability of the learned classifiers, particularly in the presence of measurement errors, outliers, or mislabeled observations.

\section*{Acknowledgements}

The first author (VB) acknowledges  financial support by  grants FEDER+Junta de Andalucía projects C‐EXP‐139‐UGR23 and  PID2020-114594GB-C21, PID2024-156594NB-C21,  
RED2022-134149-T  (Thematic Network on Location Science and Related Problems),  and the IMAG-Maria de Maeztu grant CEX2020-001105-M /AEI funded by MICIU/AEI/ 10.13039/ 501100011033. The last author (JL) acknowledges support by the Office of Naval Research (ONR) grant N00014-21-1-2574.

\end{document}